\documentclass[11pt,reqno]{amsart}
\usepackage{amsmath}
\usepackage{cases}
\usepackage{mathrsfs}
\usepackage{bbm}
\usepackage{amssymb}
\usepackage{amscd}
\usepackage{amsfonts,latexsym,amsmath,
amsthm,amsxtra,mathdots,amssymb,latexsym,mathabx}
\usepackage[all,cmtip]{xy}
\RequirePackage{amsmath} \RequirePackage{amssymb}
\usepackage{color}
\usepackage{colordvi}
\usepackage{multicol}
\usepackage{hyperref}
\usepackage{mathtools}
\usepackage[top=1.1in,bottom=1.1in,left=1.1in,right=1.1in]{geometry}   
\usepackage{xcolor}

\hypersetup{
    colorlinks,
    linkcolor={red!100!black},
    citecolor={blue!100!black},
    urlcolor={blue!100!black}
}
\usepackage{cite}

\newcommand{\bea}{\begin{eqnarray}}
\newcommand{\eea}{\end{eqnarray}}
\newcommand{\bna}{\begin{eqnarray*}}
\newcommand{\ena}{\end{eqnarray*}}

\numberwithin{equation}{section}

\theoremstyle{plain}
\newtheorem{lemma}{Lemma}[section]
\newtheorem{theorem}[lemma]{Theorem}

\theoremstyle{definition}
\newtheorem{definition}[lemma]{Definition}
\newtheorem{remark}{Remark}

\renewcommand{\Re}{\operatorname{Re}}

\newcommand{\GL}{\operatorname{GL}}

\newcommand{\dd}{\mathrm{d}}

\newcommand{\Kl}{\mathrm{Kl}}
\newcommand{\ord}{\mathrm{ord}}

\begin{document}

\title[Triple divisor type functions in arithmetic progressions]
{Triple divisor type functions in arithmetic progressions to prime power moduli}

\author{Hui wang}
\address{The University of Hong Kong Shenzhen Institute of Research and Innovation, 
  Shenzhen, Guangdong 518052, China}
\email{wh0315@mail.sdu.edu.cn}

\author{Tengyou Zhu}
\address{Data Science Institute, Shandong University
                       \\Jinan, Shandong 250100, China}
\email{tengyou.zhu@sdu.edu.cn}

\begin{abstract}
We study the equidistribution in arithmetic progressions modulo powers of a fixed odd prime of Hecke eigenvalues of noncuspidal $\mathrm{GL}_3$ 
automorphic forms, 
with a focus on the convolution coefficients $\lambda_{1\boxplus f}(n)=(\lambda_f\star 1)(n)$,
attached to a level $1$ holomorphic primitive cusp form $f$. 
Utilizing techniques introduced by Kowalski--Lin--Michel and Mili\'{c}evi\'{c}, 
we prove the exponent $\vartheta=1/2+7/458-\varepsilon$ is admissible.
\end{abstract}

\keywords{Arithmetic progressions, automorphic forms, Fourier coefficients,
         Kloosterman sums}
\subjclass[2020]{11B25, 11F11, 11F30, 11L05, 11N37}
\maketitle

\section{Introduction}
The equidistribution of arithmetic functions over arithmetic progressions constitutes one of the central, 
long-standing threads of analytic number theory. 
A core object of inquiry here is the so-called \emph{level of distribution}: given an arithmetic function
$g:\mathbb{N}\rightarrow\mathbb{C}$,  we seek the maximal exponent $\vartheta>0$ such that
for any $q\leq X^{\vartheta}$, any coprime residue $a\bmod q$ and arbitrary constant $A>0$, the uniform bound
\begin{equation}\label{level-of-distribution}
\mathop{\sum_{n\leq X}}_{n\equiv a \bmod q}g(n)-\frac{1}{\varphi(q)}\mathop{\sum_{n\leq X}}_{(n,q)=1}g(n)
\ll_{g,A} \frac{X}{q}\left(\log X\right)^{-A}
\end{equation}
holds, where $\varphi$ is the Euler totient function and the implied constant depends only on $g$, $A$.
The conjecture predicts that $\vartheta \!=\!1-\varepsilon$ is attainable for 
many classical arithmetic functions, yet crossing critical thresholds like $\vartheta=1/2$ 
remains a formidable technical barrier tied to sharp estimates of multi-dimensional exponential sums,
especially hyper-Kloosterman sums attached to higher-rank automorphic representations.

Classical prototypes of this problem begin with the von Mangoldt function 
$\Lambda(n)$, encoding prime distribution.
The classical Siegel--Walfisz theorem implies that \eqref{level-of-distribution}
holds as $q\leq (\log X)^{B}$ for some constant $B$, 
whereas the generalized Riemann hypothesis predicts $q\leq X^{1/2-\varepsilon}$. 
The significant result is that the Bombieri--Vinogradov theorem
confirms this prediction on average over the moduli (see Montgomery--Vaughan\cite{MV}).

For an integer \(k\geq2\), let \(\tau_k\) denote the \(k\)-fold divisor function
$$
\tau_k(n):=\#\{(n_1, n_2, \cdots, n_k)\in \mathbb{N}^k: n_1n_2\cdots n_k=n\}.
$$
The conjecture that \eqref{level-of-distribution} holds for $\vartheta=1-\varepsilon$, if correct,
has profound implications for our understanding of
the distribution of primes
in arithmetic progressions to very large moduli,
going far beyond the direct reach of
the generalized Riemann hypothesis.
For $k=2$, the best known value that \eqref{level-of-distribution} can hold is $\vartheta=2/3-\varepsilon$,
which stems from work of Selberg (unpublished), Hooley \cite{Hooley} and Heath-Brown \cite{HB1979}.
In each case Weil's bound for the Kloosterman sum is crucial in the proof.
For $k\geq 4$, classical conclusions can be found in Lavrik \cite{Lavrik},
Smith \cite{Smith} and some references therein.
A landmark breakthrough arrived for the triple divisor function $\tau_3(n)$, 
the Hecke eigenvalue of the trivial isobaric representation $1\boxplus1\boxplus 1$ on $\rm GL_3$.
In this case, Friedlander--Iwaniec~\cite{FI1985} first broke the $1/2$ barrier with
the exponent of distribution $\vartheta=1/2+1/230$ for
general moduli. This was subsequently improved by Heath-Brown~\cite{HB1986}, 
Fouvry--Kowalski--Michel~\cite{FKM2015} (for prime moduli), and Xi \cite{Xi} (for smooth moduli).
More recently,
Sharma \cite{Sharma} deployed the delta symbol approach to push the exponent to
 $\vartheta=1/2+1/30-\varepsilon$
over square-free and prime power moduli, relying on refined bilinear sum bounds for $\textrm{GL}_2$
weighted Kloosterman sums.

Beyond classical divisor functions, the theory of automorphic $L$-functions motivates parallel 
distribution questions for Hecke eigenvalues of a general $\textrm{GL}_d$ automorphic representation $\pi$,
with associated Dirichlet series
\bna
L(\pi, s)=\sum_{n\geq 1}\frac{\lambda_{\pi}(n)}{n^s}=\prod_pL(\pi_p,s), \quad\Re(s)>1.
\ena
In general, when $\pi$ is a $\textrm{GL}_d$ automorphic form,
a standard analytic tool for such congruence sums $n\equiv a \bmod q$ combines additive character orthogonality 
with the Vorono\"{i} summation formula,
transforming the original sum into a dual sum of approximate length $q^d/X$, involving the dual Hecke eigenvalues
$\overline{\lambda_{\pi}(n)}$, together with a twist by a $(d-1)$-dimensional
hyper-Kloosterman sum $\text{Kl}_d(a;q)$, defined by
$$
\Kl_d(a;q)=\frac{1}{q^{(d-1)/2}}
\sum_{\substack{x_1,\ldots,
x_d\,\in(\mathbb Z/q\mathbb Z)^\times\\x_1\cdots
x_d\equiv a\bmod q}}e\Bigl(\frac{x_1+\cdots+x_d}{q}\Bigr), \quad\text{for}\,\,(a,q)=1.
$$
Combined with the Vorono\"i formula and standard Rankin--Selberg
bounds for the dual coefficients, Deligne’s bound 
\bea\label{a5}
|\Kl_d(a;q)|\leq d^{\omega(q)},\quad\text{for}\,\,(a,q)=1
\eea
yields the so-called \emph{standard} level of distribution
$\frac{2}{d+1}-\varepsilon$ for $\lambda_{\pi}(n)$, which for $d=3$ collapses back to the $1/2$ critical line. 

Surpassing this $1/2$ threshold demands novel cancellation mechanisms 
for bilinear forms weighted by hyper-Kloosterman sums, 
a subject revitalized by foundational algebraic number theoretic work of Kowalski--Michel--Sawin \cite{KMS},
which developed a comprehensive geometric framework via 
$\ell$-adic cohomology, establishing nontrivial bounds for bilinear forms 
in hyper-Kloosterman sums valid well below the classical P\'{o}lya--Vinogradov Fourier range. 
As a key application, they analyzed the convolution coefficients
\bea\label{a1}
\lambda_{1\boxplus f}(n)=(\lambda_f \star 1)(n)=\sum_{d\mid n}\lambda_{f}(d),
\eea
where \(f\) is a level \(1\) holomorphic primitive cusp form
and \(\pi_f\) is the associated automorphic representation,
corresponding to the isobaric $\textrm{GL}_3$ representation $1\boxplus \pi_f$.
Their result achieved $\vartheta=1/2+1/102-\varepsilon$
for prime moduli.

In this paper, inspired by the works of Kowalski--Lin--Michel \cite{KLM} and Mili\'{c}evi\'{c} in~\cite{MD}, 
we obtain the following smoothed distribution result over powers of a fixed odd prime.
\begin{theorem}\label{Main theorem}
Let $f$ be a holomorphic primitive cusp form of level $1$,
trivial nebentypus and weight $k_f$,
with Hecke eigenvalues $\lambda_f(n)$,
normalized so that $|\lambda_f(n)|\leq \tau(n)$.

Let $p$ be a fixed odd prime, let $a$ be an integer coprime to $q$,
and let $X\geq 2$. For every $\eta>0$, there exists
$\delta=\delta(\eta)>0$ such that, for every $\gamma\geq2$ and every
fixed $V\in C_c^\infty(\mathbb R_{>0})$, whenever
\[
q=p^\gamma\leq X^{1/2+7/458-\eta},
\]
we have
\begin{equation}\label{main result}
\begin{split}
\Delta(X;a,q):=\mathop{\sum_{n\geq1}}_{n\equiv a\bmod q}
\lambda_{1\boxplus f}(n)V\left(\frac{n}{X}\right)
-\frac{1}{\varphi(q)} \mathop{\sum_{n\geq 1}}_{(n,q)=1}
\lambda_{1\boxplus f}(n)V\left(\frac{n}{X}\right)\ll_{f, V, \eta, p} (X/q)^{1-\delta},
\end{split}
\end{equation}
where the implied constant depends only on $f$, $V$, $\eta$ and $p$.

\end{theorem}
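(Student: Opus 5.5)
We follow the strategy of Kowalski--Michel--Sawin \cite{KMS}, replacing the $\ell$-adic input for prime moduli with the $p$-adic stationary phase methods of Mili\'cevi\'c \cite{MD} and Kowalski--Lin--Michel \cite{KLM}. Write $\lambda_{1\boxplus f}(n)=\sum_{md=n}\lambda_f(d)$ and insert smooth dyadic partitions of unity in $m\sim M$ and $d\sim D$ with $MD\asymp X$. The argument then splits according to the relative sizes of $M$, $D$ and $q$.

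\emph{Unbalanced ranges.} If $D\le X/q^{1-\epsilon}$, so that $M$ is large, we sum over $m\equiv a\bar d \bmod q$ by Poisson summation. The zero frequency matches the main term $\varphi(q)^{-1}\sum_{(n,q)=1}$, and the nonzero frequencies are negligible once $M\ge q^{1+\epsilon}$. If $M$ is small, we apply $\GL_2$ Vorono\"i summation modulo $q$ to the $d$-sum. The zero-frequency contribution vanishes because $f$ is cuspidal. This leaves a dual sum of length $D^*\approx q^2/D$ weighted by $\lambda_f(d^*)$ and Kloosterman sums $S(a\bar m,d^*;q)$. A trivial Weil bound on this sum costs $q^{1/2}$ per Kloosterman sum. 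Combined with a Poisson step in $m$, or with the full $\GL_3$ Vorono\"i (Kloosterman $\Kl_3$) in the balanced case, this gives the standard exponent $1/2$ exactly.

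\emph{Beating $1/2$.} The problem reduces to bounding type I and type II sums with hyper-Kloosterman weights modulo $q=p^\gamma$,
\[
\sum_{m\sim M}\sum_{n\sim N}\alpha_m\beta_n\,\Kl_3(amn;q)\quad\text{and}\quad \sum_{m\sim M}\alpha_m\sum_{n\sim N}\Kl_3(amn;q)
\]
(or $\Kl_2$ twisted by $\lambda_f$). Here $MN\approx q^3/X$, which is slightly larger than $q$ when $q>X^{1/2}$. The first step is the $\GL_3$ Vorono\"i formula (Kowalski--Lin--Michel) applied to the whole sum, or equivalently Vorono\"i on $d$ followed by Poisson on $m$. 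This produces a dual sum $\sum_{n\le q^3X^{-1+\epsilon}}\overline{\lambda_{1\boxplus f}}(n)\Kl_3(an;q)W(n)$. We then re-factor $\overline{\lambda_{1\boxplus f}}(n)=\sum_{m n'=n}\overline{\lambda_f}(n')$ dyadically. When one variable is long (type I), we complete the sum and use the explicit evaluation of $\Kl_3$ modulo prime powers by $p$-adic stationary phase. For $\gamma\ge2$, $\Kl_3(x;p^\gamma)$ is a sum of at most three terms of the form $p^{O(1)}\,\eta(x)\,e(h(x)/p^\gamma)$ with $h$ $p$-adically analytic in $x^{1/3}$. Summing $e(h(amn)/p^\gamma)$ over an interval of $n$ is then a van der Corput / $q$-analogue problem, and we use Mili\'cevi\'c's $p$-adic exponent pair estimates (his $q$-van der Corput B-process iterated). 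For the type II case, where both variables lie near $q^{1/2}$, we use Cauchy--Schwarz and shifting by $ab$ \`a la Fouvry--Kowalski--Michel. This reduces matters to correlation sums $\sum_n \Kl_3(amn;q)\overline{\Kl_3(am'n;q)}$, whose square-root cancellation unless $m\equiv m'$ follows again from the stationary phase description (the phases $h$ differ by a nonconstant analytic function).

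\emph{Main obstacle and optimization.} The hardest step is the type I/II bounds modulo $p^\gamma$ in the critical range $M\approx q^{1/2}$ up to small powers. There the $p$-adic analytic phase must be controlled uniformly in $\gamma$, including the degenerate $p$-adic neighbourhoods where the second derivative of $h$ loses a power of $p$. We would handle these by splitting $n$ into residue classes modulo $p^{\kappa}$ with $\kappa$ fixed, on which $h$ has uniform Taylor expansions. Collecting the saving from each range and balancing the ranges of $M,N$ against the exponent pair parameters yields a power saving $(X/q)^{-\delta}$ whenever $q\le X^{1/2+7/458-\eta}$. The precise constant $7/458$ comes from the linear optimization of the exponents in the worst case of this type I/II analysis.
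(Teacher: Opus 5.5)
\textbf{There is a genuine gap.} Your architecture matches the paper's. You dualize to a sum of length $q^3/X$ weighted by $\lambda_{1\boxplus\overline f}(n)\Kl_3(an;q)$ and refactor it as $\sum_{l\sim L}\sum_{m\sim M}\lambda_{\overline f}(m)\Kl_3(alm;q)$ with $LM\ll q^3/X$. You then use the prime-power evaluation of $\Kl_3$ with Mili\'cevi\'c's $p$-adic exponent pairs on the smooth $l$-sum, and Cauchy--Schwarz with a correlation bound for $\Kl_3(al_1m;q)\overline{\Kl_3(al_2m;q)}$ elsewhere.

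The gap is the range where the smooth variable is \emph{short}, $1\le L\le q^4X^{-2+\varepsilon}$, so that the $\lambda_{\overline f}$-weighted variable has length $M\asymp q^3/(XL)$. None of your tools reaches it:
\begin{itemize}
\item Exponent pairs cannot act on the $m$-sum, because its weights are automorphic. Applied to the $l$-sum (with the datum the paper uses, $(k,\ell)=(11/82,57/82)$), they save only when $q\le L^{36/175}X^{82/175}$, which is below $X^{1/2}$ for bounded $L$.
\item Completing the $l$-sum by Poisson saves only if $M<q^{1/2}$, which fails here.
\item Cauchy--Schwarz with $l$ inside, followed by Poisson in $m$, gives $\frac{X}{q}\bigl(\frac{q^4}{X^2L}+\frac{Lq^{3/2}}{X}\bigr)^{1/2}$, so it needs $L\ge q^4X^{-2}$.
\item Cauchy--Schwarz with $m$ inside makes the completed short $l$-sum cost $q^{1/2}$ per pair, so one needs $L\gg q^{1/2}$.
\end{itemize}
Hence for every $q=X^{1/2+\theta}$ with $\theta>0$, the window $1\le L\le X^{4\theta}$ is untreated. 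As described, your argument does not pass exponent $1/2$ at all.

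\textbf{The missing ingredient} is a bound for $\sum_m\lambda_f(m)\Kl_3(ml;q)V(m/M)$ that beats the trivial bound when $M$ is only a small power larger than $q$. This is a $\mathrm{GL}_2$ sum twisted by $\Kl_3$, and it is neither of your type I nor type II shapes. The paper imports it from Sharma's Theorem 1.3 (delta method):
\[
\sum_m\lambda_f(m)\Kl_3(ml;q)V(m/M)\ll p^{7/12}M^{1/2}q^{1/3+\varepsilon}(1+M/q)^{2/3}+q^{13/20+\varepsilon}.
\]
This covers $L$ whenever $q\le\min(X^{3/5}L^{-3/5},X^{7/13})$, in particular all $L\le q^4X^{-2+2\delta}$ once $\theta\le 7/458$.

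Relatedly, $7/458$ does not come from a worst case near $M\approx q^{1/2}$. It comes from balancing the exponent-pair range $q\le L^{36/175}X^{82/175}$ (datum $ABA^3B(\omega_{01})$) against the Cauchy--Schwarz range $q\le X^{2/3}L^{-2/3}$ at $L_0=X^{52/229}$. You would have to commit to that specific datum to recover the constant.

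A smaller point: the correlation sums do not have square-root cancellation for all $l_1\ne l_2$. The D\c{a}browski--Fisher bound loses a factor $p^{\min(\gamma,\nu_p(l_1-l_2),\nu_p(\tilde m))/2}$, which is harmless only after averaging over $l_1,l_2,\tilde m$.
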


\begin{remark}
Under the generalized Ramanujan conjecture, the same argument applies, with minor modifications, to Hecke--Maass cusp forms.
\end{remark}

In the special case where $f$ is replaced by the non-holomorphic Eisenstein series 
whose Hecke eigenvalues are $\tau(n)$, the convolution $\lambda_f \star 1$ 
reduces to $\tau \star 1 = \tau_3$. Thus both $\tau_3$ and $\lambda_{1\boxplus f}$ 
are Hecke eigenvalues of non-cuspidal automorphic 
representations of $\mathrm{GL}_{3,\mathbb{Q}}$, that is, the isobaric 
representations $1\boxplus 1\boxplus 1$ and $1\boxplus \pi_f$, respectively. 
The techniques developed in this paper are expected to extend to a broader class of fixed non-cuspidal $\mathrm{GL}_{3,\mathbb{Q}}$ automorphic representations.
Extending this beyond the non-cuspidal setting to genuine cuspidal $\mathrm{GL}_{3,\mathbb{Q}}$ representations remains an interesting and challenging open problem.

\medskip

\noindent\textbf{Sketch of the proof.}  
The core technical challenge of our analysis reduces to controlling bilinear sums weighted by
$\textrm{Kl}_3$ and Hecke eigenvalues $\lambda_f$.
Our proof strategy partitions the dyadic ranges of summation lengths $L,M$ for the bilinear variables, 
deploying distinct analytic tools for each regime. For small \(L\), we use Sharma's estimate for sums of
\(\mathrm{GL}_2\) coefficients twisted by \(\mathrm{Kl}_3\).
For sufficiently large \(L\), Poisson summation gives the required
saving. In the remaining depth-aspect range, the explicit
prime-power evaluation of \(\mathrm{Kl}_3\), together with the
\(p\)-adic exponent-pair method of Mili\'cevi\'c, yields cancellation
beyond the direct Fourier-completion range.

%
%
%
%
\medskip
\noindent\textbf{Notation.}
Throughout the paper, the letters
$\varepsilon$ and A denote arbitrarily small and arbitrarily large positive
real numbers, respectively, not necessarily the same at each occurrence.
As usual, $e(z) = \exp (2 \pi i z) = e^{2 \pi i z}$ for any $z\in\mathbb{C}$.

If $a\in\mathbb{Z}$ and $q\geq 1$ are integers and $(a,q)=1$, we write
$\overline{a}$ for the inverse of $a$ in $(\mathbb{Z}/q\mathbb{Z})^\times$;
the modulus $q$ will always be clear from context. 
For $c\geq 1$ and $a,b$ integers, or congruence classes modulo $c$, the classical Kloosterman sum $S(a,b;c)$
is defined by
\bna
S(a,b;c)=\mathop{\sum_{x\bmod c}}_{(x,c)=1}e\left(\frac{ax+b\overline{x}}{c}\right),
\ena
where $\overline{x}$ is the inverse of $x$ modulo $c$.

The symbol $k\sim K$ means that the integer $k$ satisfies the inequalities $K< k\le 2K$.
By $f = O(g)$ for $x\in I$, or $f \ll g$ for $x\in I$, where $I$ is an arbitrary set
on which $f$ is defined, we mean synonymously that there exists a positive constant $c$ such that $|f(x)| \leq cg(x)$ for all $x\in I$. The ``implied constant" refers to any value of $c$ for which this holds. It may depend on the set $I$, which is usually specified explicitly, or clearly determined by the context.
 
\section{Preliminaries}
In this section, we present some essential information and tools needed later. Firstly, we briefly
review some basic facts of automorphic $L$-functions on $\GL_2$.
\subsection{Automorphic $L$-functions}
Let $f$ be a holomorphic primitive cusp form of level $1$,
trivial nebentypus, and weight $k_f$, with normalized Hecke
eigenvalues $\lambda_f(n)$. Let $\chi$ be a primitive Dirichlet
character modulo $q>1$. We consider the degree-$3$ $L$-function
\[
L((1\boxplus f)\times\chi,s)
=
L(\chi,s)L(f\times\chi,s),
\]
where the twisted $L$-function $L(f\times \chi, s)$ associated with $f$ is defined by
\bna
L(f\times \chi, s)=\sum_{n\geq 1}\frac{\lambda_{f}(n)\chi(n)}{n^s}
\ena
in the half plane $\Re(s)>1$.
According to \eqref{a1},
note that for $\Re(s)>1$, we have the Dirichlet series expansion
\bna
L((1\boxplus f)\times \chi, s)
=\sum_{n\geq 1}\frac{\lambda_{1\boxplus f}(n)\chi(n)}{n^s},
\ena
which has an analytic continuation to the whole complex plane $\mathbb{C}$.
Put
\bna
\kappa_\chi=
\begin{cases}
0, & \chi(-1)=1,\\
1, & \chi(-1)=-1.
\end{cases}
\ena
Define
$\Gamma_{\mathbb{R}}(s)=\pi^{-s/2}\Gamma(s/2)$ and let
$L_{\infty,\kappa_\chi}(s)=L_\infty(f,s)L_\infty(\chi,s)$, where
\bna
L_\infty(f,s)=\Gamma_{\mathbb{R}}\left(s+\frac{k_f-1}{2}\right)
\Gamma_{\mathbb{R}}\left(s+\frac{k_f+1}{2}\right),\quad
L_\infty(\chi,s)=\Gamma_{\mathbb{R}}(s+\kappa_\chi)
\ena
are the archimedean $L$-factors of $L(f,s)$ and
$L(\chi,s)$, respectively. Note that the archimedean
$L$-factor of $L(f\times\chi,s)$ coincides with that of $L(f,s)$.
Further, let 
\bna
\epsilon((1\boxplus f)\times \chi)
=i^{-\kappa_\chi}\epsilon(f)\epsilon_{\chi}^3,
\ena
where $\epsilon(f)$ is the root number of $L(f,s)$ satisfying
$|\epsilon(f)|=1$ and
\bna
\epsilon_{\chi}=q^{-1/2}\sum_{x\in (\mathbb{Z}/q\mathbb{Z})^{\times}}
\chi(x)e\left(\frac{x}{q}\right)
\ena
is the normalized Gauss sum associated with $\chi$.
Since $\chi$ is primitive, we have $|\epsilon_\chi|=1$.
Define the completed $L$-function
\bna
\Lambda((1\boxplus f)\times \chi, s)
=q^{3s/2}L_{\infty,\kappa_\chi}(s)L((1\boxplus f)\times \chi, s),
\ena
which satisfies a functional equation of the form
\bea\label{FE}
\Lambda((1\boxplus f)\times \chi, s)
=\epsilon((1\boxplus f)\times \chi)
\Lambda((1\boxplus \overline{f})\times \overline{\chi}, 1-s),
\eea
where $\overline{f}$ is the dual form of $f$ for which
$\lambda_{\overline{f}}(n)=\overline{\lambda_f(n)}$ and
$L_\infty(\overline{f},s)=\overline{L_\infty(f,\overline{s})}$,
and $\overline{\chi}$ denotes the complex conjugate of $\chi$.

For more details, we refer readers to Iwaniec--Kowalski \cite[Sections 5.1 \& 14.8]{IK}.
\subsection{Poisson summation}
We recall the Poisson summation formula over an arithmetic progression (see e.g. \cite[Eq.\,(4.24)]{IK}).

\begin{lemma}
Let $\beta\in \mathbb{Z}$ and let $q\geq 1$ be an integer.
For a Schwarz function $V:\mathbb{R}\rightarrow\mathbb{C}$, we have
\begin{equation*}\label{Poisson}
\sum_{\substack{n\in \mathbb{Z} \\ n \equiv \beta \bmod q}}V(n)
=\frac{1}{q}\sum_{n\in \mathbb{Z}}\widehat{V}\left(\frac{n}{q}\right)
e\left(\frac{n\beta}{q}\right),
\end{equation*}
where \begin{equation*}
\widehat{V}(x) = \int_{\mathbb{R}} V(u)e(-xu)\mathrm{d}u.
\end{equation*}
is the Fourier transform of $V$.
Repeated integration by parts shows 
\begin{equation*}\label{Vcheck}
\widehat{V}(x)\ll_{A,V} (1+|x|)^{-A},
\end{equation*}
for any $A\geq 0$.
\end{lemma}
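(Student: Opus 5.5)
The statement to prove is the Poisson summation formula over an arithmetic progression. I would reduce it to the classical Poisson summation formula over all integers by an affine change of variables, and then get the decay bound for $\widehat V$ by integrating by parts.

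\emph{Reduction.} Write $n=\beta+qm$ with $m\in\mathbb{Z}$, so that
\[
\sum_{n\equiv\beta \bmod q}V(n)=\sum_{m\in\mathbb{Z}}W(m),\qquad W(u):=V(\beta+qu).
\]
The function $W$ is again a Schwartz function.

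\emph{Classical Poisson.} Next I apply $\sum_{m}W(m)=\sum_{n}\widehat W(n)$ for Schwartz $W$. To prove it, consider the $1$-periodic function $F(x)=\sum_m W(x+m)$. It is smooth, because the series and all its derivatives converge uniformly thanks to the rapid decay of $W$ and its derivatives. Its Fourier coefficients are
\[
\int_0^1F(x)e(-nx)\,\dd x=\int_{\mathbb{R}}W(x)e(-nx)\,\dd x=\widehat W(n).
\]
These coefficients decay rapidly, so the Fourier series of $F$ converges absolutely and uniformly to $F$. Evaluating at $x=0$ gives the identity.

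\emph{Computing $\widehat W$.} Substitute $t=\beta+qu$:
\[
\widehat W(n)=\int_{\mathbb{R}} V(\beta+qu)e(-nu)\,\dd u=\frac1q\int_{\mathbb{R}} V(t)\,e\Bigl(-\frac{n(t-\beta)}{q}\Bigr)\dd t=\frac1q\,\widehat V\Bigl(\frac nq\Bigr)e\Bigl(\frac{n\beta}{q}\Bigr).
\]
Summing over $n$ gives the stated formula.

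\emph{Decay bound.} For $|x|\le1$ the trivial bound $|\widehat V(x)|\le\|V\|_{L^1}$ suffices. For $|x|\ge1$, I integrate by parts $j$ times, using $e(-xu)=(-2\pi i x)^{-1}\frac{\dd}{\dd u}e(-xu)$. The boundary terms vanish because $V$ is Schwartz, which gives
\[
\widehat V(x)=(2\pi i x)^{-j}\int_{\mathbb{R}} V^{(j)}(u)e(-xu)\,\dd u\ll_{j,V}|x|^{-j}.
\]
Taking $j=\lceil A\rceil$ yields $\widehat V(x)\ll_{A,V}(1+|x|)^{-A}$.

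The only genuinely analytic point is justifying pointwise convergence in the classical Poisson formula. That is handled by the smoothness of $F$ together with the rapid decay of $\widehat W$, which itself follows from the integration-by-parts bound applied to $W$. Everything else is bookkeeping of the change of variables, where the main thing to get right is the sign in the phase $e(n\beta/q)$.
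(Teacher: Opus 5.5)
Your proof is correct and follows the standard route. The paper gives no argument of its own beyond citing Iwaniec--Kowalski, Eq.~(4.24), which is obtained exactly by your substitution $n=\beta+qm$ in the classical Poisson formula, and its decay bound for $\widehat V$ comes from the same repeated integration by parts that you carry out.
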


\subsection{Kloosterman sums}\label{subsec:Kloosterman sum}
We first record a useful vanishing criterion for hyper-Kloosterman sums.
For later use, we extend the definition of \(\Kl_3(t;q)\)
to every integer \(t\) by setting
\[
\Kl_3(t;q)
:=\frac{1}{q}\,\,\,\sideset{}{^*}\sum_{x,y\bmod q}e\left(\frac{x+y+t\overline{xy}}{q}\right),
\]
where the superscript $*$ means that both \(x\) and \(y\)
are coprime to \(q\). Equivalently,
\[
\Kl_3(t;q)=\frac{1}{q}\,\,\,\sideset{}{^*}\sum_{x\bmod q}
e\left(\frac{t\overline{x}}{q}\right)S(1,x;q).
\]
When \((t,q)=1\), this definition agrees with the preceding
definition of the normalized hyper-Kloosterman sum.
\begin{lemma}\label{lem:Kloosterman=0-initial}
Suppose $q=p^\gamma$ with a prime $p$ and an integer  $\gamma\geqslant2$.
For any $m\in\mathbb{Z}$ with $p\mid m$ and $(a,p)=1$, we have
$$
\Kl_3(am;q)=0.
$$
\end{lemma}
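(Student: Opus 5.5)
The plan is to show that the sum defining $\Kl_3(t;q)$ vanishes whenever $p\mid t$ and $\gamma\ge2$, by exploiting a shift $x\mapsto x(1+p^{\gamma-1}u)$ that leaves the phase essentially unchanged except for a linear term in $u$. Write $t=am$ with $p\mid t$. Start from the representation
\[
\Kl_3(t;q)=\frac{1}{q}\,\,\sideset{}{^*}\sum_{x,y\bmod q}e\left(\frac{x+y+t\overline{xy}}{q}\right).
\]
We substitute $x\mapsto x(1+p^{\gamma-1}u)$ for $u\bmod p$. This is a permutation of $(\mathbb{Z}/q\mathbb{Z})^\times$ for each fixed $u$. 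We then average over $u\bmod p$.

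Since $\gamma\ge2$, we have $2(\gamma-1)\ge\gamma$, so $(1+p^{\gamma-1}u)^{-1}\equiv 1-p^{\gamma-1}u \bmod q$. Hence
\[
x(1+p^{\gamma-1}u)+t\overline{xy}(1-p^{\gamma-1}u)\equiv x+t\overline{xy}+p^{\gamma-1}u\,(x-t\overline{xy}) \bmod q.
\]
Because $p\mid t$, the term $p^{\gamma-1}u\,t\overline{xy}$ is divisible by $p^\gamma=q$ and disappears. The phase therefore changes only by $e(ux/p)$. Averaging over $u\bmod p$ gives
\[
\Kl_3(t;q)=\frac{1}{q}\,\,\sideset{}{^*}\sum_{x,y\bmod q}e\left(\frac{x+y+t\overline{xy}}{q}\right)\cdot\frac1p\sum_{u\bmod p}e\left(\frac{ux}{p}\right).
\]
The inner sum is $0$ unless $p\mid x$. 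That never happens, since $x$ is coprime to $q$, so the whole sum vanishes.

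There is no real obstacle here. The only points to check are that the substitution is a bijection on units modulo $q$, and that the inverse expansion uses $\gamma\ge2$; this is exactly where the hypothesis enters. The hypothesis $(a,p)=1$ is not needed beyond the fact that $p\mid am$.
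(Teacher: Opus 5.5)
Your proof is correct and is essentially the paper's argument in different coordinates. The paper writes $\Kl_3(am;q)=q^{-1}\sum_{x}^{*}e(am\overline{x}/q)S(1,x;q)$ and splits $x=\beta+p^{\gamma-1}\alpha$; the $am\overline{x}$ phase does not depend on $\alpha$ because $p\mid m$, and opening the Kloosterman sum leaves $\sum_{\alpha\bmod p}e(\overline{d}\alpha/p)=0$. That is your mechanism: move one unit variable within its class modulo $p^{\gamma-1}$ and sum a nontrivial linear character modulo $p$. You are also right that $(a,p)=1$ is never used.
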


\begin{proof}
Noting
\bna
\text{Kl}_3(am;p^\gamma)=\frac{1}{p^\gamma}\,\,\sideset{}{^*}
\sum_{x\bmod p^\gamma}e\left(\frac{am\overline{x}}{p^\gamma}\right)S(1,x; p^\gamma),
\ena
we can write
\bna
x=p^{\gamma-1}\alpha+\beta,\quad 1\leq\alpha\leq p,\quad 1\leq\beta\leq p^{\gamma-1},\,\, (\beta, p)=1.
\ena
and
\bna
\overline{x}\equiv\overline{\beta}-p^{\gamma-1}\overline{\beta}^2\alpha\quad(\bmod p^\gamma),
\ena
by simple calculation.
Opening the Kloosterman sum and exchanging the order of sums, we have
\bna\label{b6}
\text{Kl}_3(am;p^\gamma)&=&\frac{1}{p^\gamma}\,\,\sideset{}{^*}\sum\limits_{{\beta \bmod p^{\gamma-1}}} \sum_{\alpha \bmod p}
e\left(\frac{am(\overline{\beta}-p^{\gamma-1}\overline{\beta}^2\alpha)}{p^\gamma}\right)S(1,\beta + p^{\gamma-1} \alpha; p^\gamma)\nonumber\\
&=&\frac{1}{p^\gamma}\,\,\sideset{}{^*}\sum\limits_{{\beta \bmod p^{\gamma-1}}}
e\left(\frac{am\overline{\beta}}{p^\gamma}\right)
\sideset{}{^*}\sum_{d\bmod p^\gamma}e\left(\frac{d+\overline{d}\beta}{p^\gamma}\right)
\sum_{\alpha \bmod p}e\left(\frac{\overline{d}\alpha}{p}\right),
\ena
since $p\mid m$.
The sum over $\alpha$ above now vanishes due to the orthogonality of additive characters.
This completes the proof.
\end{proof}
Let $\mathbb{Z}_p$ denote the ring of $p$-adic integers within the field $\mathbb{Q}_p$ of $p$-adic numbers. 
The multiplicative unit group is defined by
$\mathbb{Z}_p^\times := \mathbb{Z}_p \setminus p\mathbb{Z}_p.$
We use the following evaluation of the normalized hyper-Kloosterman sums modulo prime powers,
which can be found in D\c{a}browski--Fisher~\cite[Eqs.~(1.16)--(1.17)]{DF1997}.
\begin{lemma}\label{lem:Kloosterman}
Let $q=p^\gamma$. Assume either that $p\neq3$ and
$\gamma\geq2$, or that $p=3$ and $\gamma\geq5$. For $A\in(\mathbb{Z}/p^\gamma\mathbb{Z})^\times$, define
\begin{align*}
\epsilon(A,p^\gamma)=
\begin{cases}
\Bigl(\frac{p^\gamma}{3}\Bigr), & p\neq3, \,\gamma \geq 2,\\
\sqrt3\Bigl(\frac{A}{3}\Bigr)i,& p=3, \,\gamma \geq 5.
\end{cases}
\end{align*}
Then for $a\in \mathbb{Z}_p^{\times}$, we have 
\begin{equation*}
\Kl_3(a;q)=\sum_{v \in \mathbb{Z}_p\atop v^3= a}\epsilon(v,p^\gamma)e\left(\frac{3v}{p^\gamma}\right).
\end{equation*}
\end{lemma}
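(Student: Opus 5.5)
The plan is to evaluate $\Kl_3(a;p^\gamma)$ by $p$-adic stationary phase applied to the two-variable representation
\[
\Kl_3(a;q)=\frac1q\,\,\sideset{}{^*}\sum_{x,y\bmod q}e\Bigl(\frac{\phi(x,y)}{q}\Bigr),\qquad \phi(x,y)=x+y+a\overline{xy}.
\]
Write $\gamma=2m+\rho$ with $\rho\in\{0,1\}$. Parametrize $x=x_0+p^{m+\rho}s$ and $y=y_0+p^{m+\rho}t$, where $x_0,y_0$ run over units modulo $p^{m+\rho}$ and $s,t$ run modulo $p^{m}$. Then Taylor expand $\phi$ around $(x_0,y_0)$.

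\textbf{Step 1: Taylor expansion.} The expansion of $\overline{x}$ around a unit $x_0$ is a convergent $p$-adic series, and for $p$ odd the cubic and higher terms carry a factor $p^{3(m+\rho)}$ or more.
\begin{itemize}
\item If $\rho=0$, only the linear term survives modulo $p^{2m}$. Summing over $s,t$ then forces $\nabla\phi(x_0,y_0)\equiv0\bmod p^m$.
\item If $\rho=1$, write $x=x_0+p^{m}s$ and $y=y_0+p^{m}t$ instead, with $s,t$ modulo $p^{m+1}$, and split $s=s_0+ps_1$, $t=t_0+pt_1$. The linear term again forces $\nabla\phi(x_0,y_0)\equiv0\bmod p^{m}$. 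What remains is a quadratic exponential sum modulo $p$ in $(s_0,t_0)$, whose form is given by the Hessian of $\phi$.
\end{itemize}

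\textbf{Step 2: critical points.} The gradient is $\nabla\phi=(1-a\overline{x^2y},\,1-a\overline{xy^2})$. Its vanishing forces $x\equiv y$ (after dividing, $x/y\equiv1$) and $x^3\equiv a$. By Hensel's lemma, for $p\neq3$ the solutions modulo $p^m$ correspond bijectively to the cube roots $v\in\mathbb Z_p$ of $a$; there are $0$, $1$ or $3$ of them. At $x=y=v$ the phase equals $3v$, which gives the factor $e(3v/p^\gamma)$. I will check that each critical class contributes the main term independently of the choice of lift.

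\textbf{Step 3: the Hessian and the constant $\epsilon$.} At $x=y=v$ we have $\phi_{xx}=\phi_{yy}=2/v$ and $\phi_{xy}=1/v$, so $\det=3/v^2$.
\begin{itemize}
\item For $\gamma$ even the normalization $q^{-1}p^{2m}$ leaves coefficient $1$.
\item For $\gamma$ odd, the quadratic Gauss sum modulo $p$ of a binary form with determinant $D$ equals $\bigl(\frac{-D}{p}\bigr)p$. Here this gives $\bigl(\frac{-3}{p}\bigr)=\bigl(\frac{p}{3}\bigr)$, which is exactly $\bigl(\frac{p^\gamma}{3}\bigr)$ for odd $\gamma$.
\end{itemize}
For even $\gamma$ the symbol $\bigl(\frac{p^\gamma}{3}\bigr)$ equals $1$, so both parities match the stated $\epsilon(v,p^\gamma)$.

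\textbf{Main obstacle: $p=3$.} Here $3\mid\det$, so the Hessian is degenerate modulo $3$ and the above argument breaks down. I would diagonalize the quadratic form: in the variables $u=s-t$ and $w=s+t$ it becomes $(3w^2+u^2)/(2v)$-type. This means the $w$-direction has an extra factor of $3$, and stationary phase in that direction must be pushed one level deeper, which requires a cubic Taylor term. Hensel lifting of the cube roots also needs higher precision when $p=3$, since $3v^2$ is divisible by $3$. I expect that for $\gamma\geq5$ this extra depth is available, and that the resulting one-dimensional Gauss sum modulo $3$ yields $\sqrt3\,i\bigl(\frac{v}{3}\bigr)$. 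I expect this bookkeeping to be the hardest part of the proof; alternatively, one can simply quote D\c{a}browski--Fisher for this case.
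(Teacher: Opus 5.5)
The paper gives no proof of this lemma; it quotes D\c{a}browski--Fisher (their (1.16)--(1.17)). You instead rederive it by two-variable $p$-adic stationary phase, which is essentially the method behind that reference. For odd $p\neq3$ your outline is correct. The critical classes modulo $p^m$ are $x_0\equiv y_0\equiv v$ with $v^3=a$; Hensel applies because $3v^2$ is a unit, and distinct cube roots stay distinct modulo $p$. The critical value is $\phi(v,v)=3v$. For odd $\gamma$ the form $(s^2+st+t^2)/v$ has Gauss sum $\bigl(\frac{-3}{p}\bigr)p=\bigl(\frac{p}{3}\bigr)p$, which gives exactly $\epsilon(v,p^\gamma)$.

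One step should be made explicit rather than deferred. For odd $\gamma$ the gradient at $(x_0,y_0)$ vanishes only modulo $p^m$, so $p^m(s_0\phi_x+t_0\phi_y)$ leaves a linear form in $(s_0,t_0)$ modulo $p$. The residual sum is therefore an inhomogeneous quadratic sum, not just the Gauss sum of the Hessian. The simplest fix is to take as representative of each critical class an integer $\tilde v\equiv v\bmod p^\gamma$. Then $\nabla\phi(\tilde v,\tilde v)\equiv0\bmod p^\gamma$, the linear form disappears, and $\phi(\tilde v,\tilde v)\equiv3v\bmod p^\gamma$. Your Gauss-sum evaluation also uses $p$ odd, so the formally included case $p=2$, $\gamma$ odd is not covered; this is irrelevant for the paper. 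Compared with the bare citation, your route gives a transparent, self-contained proof for $p\ge5$ and shows where the sign $\bigl(\frac{p}{3}\bigr)$ comes from.

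For $p=3$ your paragraph is a heuristic, not a proof, so there you rely on the same citation as the paper, which is acceptable. The obstacles are real. The Hessian is singular modulo $3$, and for $m\ge2$ a $3$-adic cube $a=v^3$ has three cube roots $v(1+3^{m-1}k)$, $k=0,1,2$, modulo $3^m$. So the critical classes are no longer in bijection with cube roots in $\mathbb{Z}_3$.

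For even $\gamma=2m\ge6$ your own Step 1 still finishes the job. All three classes are critical, and
$\phi\bigl(v(1+t),v(1+t)\bigr)=3v+vt^2(3+2t)(1+t)^{-2}$.
Taking $t=3^{m-1}k$ gives $\phi\equiv3v+3^{2m-1}vk^2\bmod 3^{2m}$. The three classes therefore sum to
$e(3v/3^\gamma)\sum_{k\bmod 3}e(vk^2/3)=\sqrt3\,i\bigl(\frac{v}{3}\bigr)e(3v/3^\gamma)$.

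For odd $\gamma$ the rank-one form modulo $3$ makes the bookkeeping messier. A cleaner uniform route is to sum over $y$ first, using the Sali\'e-type evaluation of $S(1,b;3^\gamma)$, whose phase is nondegenerate even for $p=3$. This gives
$\Kl_3(a;q)=q^{-1/2}\varepsilon_q\sum_{\ell\in(\mathbb{Z}/q\mathbb{Z})^\times}\bigl(\frac{\ell}{q}\bigr)e\bigl((2\ell+a\overline{\ell}^{\,2})/q\bigr)$,
where $\varepsilon_q=1$ or $i$ according as $q\equiv1$ or $3\bmod 4$. Now put $a=v^3$ and $\ell=vu$, and use $2u+u^{-2}-3=(u-1)^2(2u+1)u^{-2}$. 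The classes $u\equiv2\bmod3$ have no critical point. The class $u=1+3w$ gives the phase $vw^2(1+2w)(1+3w)^{-2}/3^{\gamma-3}$, with a nondegenerate critical point at $w=0$. Stationary phase modulo $3^{\gamma-3}$ requires $\gamma-3\ge2$, which is where $\gamma\ge5$ comes from. Finally, $\varepsilon_{3^\gamma}\varepsilon_{3^{\gamma-3}}=i$ and $3^{-\gamma/2}\cdot9\cdot3^{(\gamma-3)/2}=\sqrt3$ together yield $\sqrt3\,i\bigl(\frac v3\bigr)e(3v/3^\gamma)$. If $a$ is not a $3$-adic cube, i.e. $a\not\equiv\pm1\bmod 9$, the derivative $2(1-a\ell^{-3})$ has $3$-adic valuation at most $1$, and the sum vanishes for $\gamma\ge3$.
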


\subsection{$p$-adic exponent data and pairs}
We now give a brief summary of this theory, following \cite[Sections 2\,\&\,3]{MD}.

Let the notations be as in Section~\ref{subsec:Kloosterman sum}.  Among all power series $a(t)=\sum_{k=0}^{\infty}a_kt^k$ with 
coefficients $a_k\in\mathbb{Z}_p$, we define the set
\begin{align*}
\mathbf{I}_0(\mathbb{Z}_p)=\left\{a(t):a_k\in\mathbb{Z}_p
\,(k\geqslant 0),\,\,\lim{k\rightarrow\infty} |a_k|_p=0\right\}.
\end{align*}
For any given $\lambda\in\mathbb{R}_{\geqslant 0}$, we further define the following subspaces of $\mathbf{I}_0(\mathbb{Z}_p)$:
\begin{equation*}
\mathbf{I}_0[\lambda](\mathbb{Z}_p)=\{a(t)\in\mathbf{I}_0(\mathbb{Z}_p):\text{ord}_pa_k\geqslant\lceil k\lambda\rceil\,\,(k\in\mathbb{N}_0)\}.
\end{equation*}
We set $\rho_p = 1/(p-1)$. For $y\in\mathbb{Q}^{+}$, 
we define $\rho_p(y) = \rho_p$ if $\ord_p(y) \neq 0$ and $0$ otherwise.
Moreover, for $y\in\mathbb{Q}^{+}$, let $\iota(y)=\max(0, \text{ord}_p(y^{-1}))$ and $\iota'(y)=\max(0, \text{ord}_p\,y)$, so that $\text{ord}_p\,y=\iota'(y)-\iota(y)$.
\begin{definition}\label{Definition}
Let $w \in \mathbb{Z}$, $u, \kappa \in \mathbb{N}$ with $\kappa \geq 1 + \iota'(2)$, $\lambda \in \rho_p \mathbb{N}$, and $y \in \mathbb{Q}^+$. Let $\iota = \iota(y)$, $\iota' = \iota'(y)$, and let $\omega, \omega' \in \mathbb{Z}_p^\times$. A power series $f \in \mathbb{Q}_p^\times \mathbf{I}_0(\mathbb{Z}_p)$ is said to belong to the class $\mathbf{F}(w, y, \kappa, \lambda, u, \omega, \omega')$ if its derivative satisfies
\begin{equation}
\label{ClassFDefinition}
f'(t) = p^w \omega' \big( 1 + p^{\,\iota + \kappa} \omega t \big)^{-y} + p^w \gamma_0 + p^{\,u + w} g(t)
\end{equation}
for some $\gamma_0 \in \mathbb{Z}_p$ and some $g \in \mathbf{I}_0[\lambda](\mathbb{Z}_p)$.
We say that $f$ belongs to the class $\mathbf{F}(w, y, \kappa, \lambda, u)$ if $f \in \mathbf{F}(w, y, \kappa, \lambda, u, \omega, \omega')$ for some $\omega, \omega' \in \mathbb{Z}_p^\times$.
\end{definition}

We now define $p$-adic exponent data and pairs. Let $P$ denote the set of prime numbers. For any sets $X$, $Y$, and any family of subsets $X_p \subset X$ ($p \in P$), let $\mathbf{J}(X_p;Y)$ be the set of all functions $g : \mathbb{Q}^{+} \times \bigsqcup_{p \in P} ({p} \times X_p) \to Y$ such that, for every $y \in \mathbb{Q}^{+}$, there exists a finite subset $P_0(y) \subset P$ and a function $g_0 : (P \setminus P_0(y)) \times X \to Y$ such that $g(y,p,x) = g_0(p,x)$ for every $p \in P \setminus P_0(y)$ and every $x \in X_p$.

In particular, write $\mathbf{J}(Y) := \mathbf{J}(\emptyset;Y)$ for the set of functions 
$g(y,p) : \mathbb{Q}^{+} \times P \to Y$ with the above properties, and 
$\mathbf{J}_1(Y) := \mathbf{J}(\mathbb{N}_p' \times \rho_p \mathbb{N};Y)$ 
(with $X = \mathbb{R}^{+}$ and $\mathbb{N}'_p = \iota'(2) + \mathbb{N}$) for the set of such functions $g(y,p,\kappa,\lambda) : \mathbb{Q}^{+} \times \bigsqcup_{p \in P} ({p} \times \mathbb{N}'_p \times \rho_p \mathbb{N}) \to Y$.

\begin{definition}\label{exponent pair}
Let $k,\ell\in\mathbb{R}$ with $0\leqslant k\leqslant\frac12\leqslant\ell\leqslant 1$, $r\in\mathbf{J}_1(\mathbb{R})$, $\delta\in\mathbb{R}^{+}_0$, $n_0,u_0\in\mathbf{J}_1(\mathbb{N})$, $\kappa_0\in\mathbf{J}(\mathbb{N})$, $\lambda_0\in\mathbf{J}(\mathbb{R}^{+}_0)$, and $n_0(y,p,\kappa,\lambda)>\kappa+\iota'(y)$.
We say that a quintuple $$\big(k,\,\ell,\,\,r,\,\delta,\,\,(n_0,u_0,\kappa_0,\lambda_0)\big)$$ is a $p$-adic exponent datum if, for every $p\in P$, $y\in\mathbb{Q}^{+}$, $w\in\mathbb{Z}$, $\kappa\in\mathbb{N}$ with $\kappa\geqslant 1+\iota'(2)$, $\lambda\in\rho_p\mathbb{N}$, $n,u\in\mathbb{N}$ such that
$$ \kappa\geqslant\kappa_0(y,p),\quad\lambda\geqslant\lambda_0(y,p),\quad n\geqslant w+n_0(y,p,\kappa,\lambda),\quad u\geqslant u_0(y,p,\kappa,\lambda), $$
and for every $f\in\mathbf{F}(w,y,\kappa,\lambda,u)$, $M\in\mathbb{Z}$, and $0<H\leqslant p^{n-w-\kappa-\iota'}$, we have the estimate
\begin{equation}
\label{SharpDef}
\sum_{M < m\leqslant M+H}e\left(\frac{f(m)}{p^n}\right)\ll p^{r}\left(\frac{p^{n-w-\kappa-\iota'}}H\right)^kH^{\ell}
\big(\log p^{n-w-\kappa-\iota'}\big)^{\delta},
\end{equation}
where $r=r(y,p,\kappa,\lambda)$, and the implied constant depends only on the datum.
We say that a pair
$$ (k,\ell) $$
of non-negative numbers is a $p$-adic exponent pair if $\big(k,\ell,r,\delta,(n_0,u_0,\kappa_0,\lambda_0)\big)$ is a $p$-adic exponent datum for some $r\in\mathbf{J}_1(\mathbb{R})$, $\delta\in\mathbb{R}^{+}_0$, $n_0,u_0\in\mathbf{J}_1(\mathbb{N})$, $\kappa_0\in\mathbf{J}(\mathbb{N})$, $\lambda_0\in\mathbf{J}(\mathbb{R}^{+}_0)$.
\end{definition}
Note that $(0,1)$ is trivially a $p$-adic exponent pair, as $(0,1,0,0,(\kappa+\iota'+1,1,1+\iota'(2),\rho_p))$ is a $p$-adic exponent datum. 

We show how to generate such $p$-adic exponent data by iterating the $A$- and $B$-processes \cite[Theorems 5\,\&\,4]{MD}. More precisely, if $\big(k,\ell,r,\delta,(n_0,u_0,\kappa_0,\lambda_0)\big)$ is a $p$-adic exponent datum, then the following are also $p$-adic exponent data:
\begin{align*}
&A\big(k,\ell,r,\delta,(n_0,u_0,\kappa_0,\lambda_0)\big) = \left( \frac{k}{2(k+1)}, \frac{k+\ell+1}{2(k+1)}, \tilde{r}, \tilde{\delta}, (\tilde{n}_0, \tilde{u}_0, \tilde{\kappa}_0, \tilde{\lambda}_0) \right),\\
&B\big(k,\ell,r,\delta,(n_0,u_0,\kappa_0,\lambda_0)\big) = \left( \ell-\frac{1}{2}, k+\frac{1}{2}, \tilde{r}, \tilde{\delta}, (\tilde{n}_0, \tilde{u}_0, \tilde{\kappa}_0, \tilde{\lambda}_0) \right),
\end{align*}
where the parameters $\tilde{r}$, $\tilde{\delta}$ and $(\tilde{n}_0, \tilde{u}_0, \tilde{\kappa}_0, \tilde{\lambda}_0)$ are as defined in \cite[Theorems 5\,\&\,4]{MD}.
In practice, these parameters are less critical, and it should be noted that their specific values differ between the $A$- and $B$-processes.
Applying the $A$- and $B$-processes to the initial datum $\omega_{01}$, 
we obtain, after some computation, the new datum
\begin{align*}
A(\omega_{1/2})=AB(\omega_{01})&=\Big(\tfrac16,\tfrac23,\tfrac16(1-\kappa),\tfrac12,\\
&\qquad\qquad\big(\big\lceil\max(5\kappa-1,\varepsilon_u(\tfrac{13}3+\tfrac53\kappa-3\lfloor\tilde{\lambda}\rfloor)) \big\rceil,\\
&\qquad\qquad\hphantom{\big(}\max(2\kappa+\lceil\tilde{\lambda}\rceil-\lfloor\lambda\rfloor
-2\lfloor\tilde{\lambda}\rfloor+1,1),1,2\rho_p\big)\Big),
\end{align*}
which is valid for all primes $p\not\in\{2,3\}$ such that $\operatorname{ord}_p(y) = \operatorname{ord}_p(y+1) = 0$. Here, $\tilde{\lambda} = \min( \kappa - \rho_p(y), \lambda )$, and $\varepsilon_u$ is defined as $\varepsilon_u=0$ if $u_0(y^{\pm}+1,p,\kappa,\tilde{\lambda})-\kappa+\iota'(2)+\varepsilon(y^{\pm})\leqslant 0$, and $\varepsilon_u=1$ otherwise.

The $p$-adic exponent data simplify under the restriction $\kappa = \tilde{\lambda}$, which is equivalent to $\lambda \geqslant \kappa$ and $\rho_p(y) = 0$. With this simplification, we have the following exponent data:
\begin{align*}
\omega_{01}[\kappa=\tilde{\lambda}]&=\big(0,1,0,0,(\kappa+1,1,1,\rho_p)\big),\\
&\qquad \ord_py=0,\\
B(\omega_{01})[\kappa=\tilde{\lambda}]&=\big(\tfrac12,\tfrac12,0,1,(\kappa+1,1,1,\rho_p)\big),\\
&\qquad p\not\in\{2,3\},\,\,\ord_py=0,\\
AB(\omega_{01})[\kappa=\tilde{\lambda}]&=\big(\tfrac16,\tfrac23,\tfrac16(1-\kappa),\tfrac12,(5\kappa-1,1,1,2\rho_p)\big),\\
&\qquad p\not\in\{2,3\},\,\,\ord_p\{y,y+1\}=0,\\
 BA^3B(\omega_{01})[\kappa=\tilde{\lambda}]&=\big(\tfrac{11}{30},
 \tfrac{8}{15},\tfrac1{10}(1-\kappa),\tfrac12,
 (\lceil\tfrac{23}2\kappa-6\rceil,1,1,2\rho_p)\big),\\
&\qquad p\not\in\{2,3\},\,\,{\rm ord}_p\{y,y+1,y+2,y+3,\\
& \qquad 2y+1,2y+3,3y+1,3y+2\}=0,\\
ABA^3B(\omega_{01})[\kappa=\tilde{\lambda}]&=\big(\tfrac{11}{82},
 \tfrac{57}{82},\tfrac7{41}(1-\kappa),\tfrac12,
 (\lceil\tfrac{71}4\kappa-\frac{39}{4}\rceil,1,1,2\rho_p)\big),\\
&\qquad p\not\in\{2,3\},\,\,{\rm ord}_p\{y,y+1,y+2,y+3,y+4,\\
&\qquad 2y+1,2y+3,2y+5,3y+1,3y+2,3y+3,\\&\qquad3y+4,3y+5,4y+1,4y+3,5y+2,5y+3\}=0.
\end{align*}
With a supply of $p$-adic exponent data, we can go beyond the limitations of direct Fourier-analytic methods.

\section{Proof of Theorem \ref{Main theorem}}
First, we make several preliminary reductions in the proof of Theorem~\ref{Main theorem}.
Put
\[
\mathcal{S}_\chi(X):=\sum_{n\geq 1}
\lambda_{1\boxplus f}(n)\chi(n)V\left(\frac{n}{X}\right).
\]
Since $(a,q)=1$, the orthogonality relation for Dirichlet
characters $\chi\bmod q$ gives that the first term of $\Delta(X;a,q)$ is equal to
\begin{equation*}
\begin{split}
\frac{1}{\varphi(q)}\sum_{\chi\bmod q}\overline{\chi}(a)\mathcal{S}_\chi(X)
=\frac{1}{\varphi(q)} \mathop{\sum_{n\geq 1}}_{(n,q)=1}
\lambda_{1\boxplus f}(n)V\big(\frac{n}{X}\big)+\frac{1}{\varphi(q)}
\mathop{\sum_{\chi\bmod  q}}_{\chi\neq\chi_0}
\overline{\chi}(a)\mathcal{S}_{\chi}(X).
\end{split}\end{equation*}
Consequently,
\begin{equation}\label{character-decomposition}
\Delta(X;a,q)
=
\frac{1}{\varphi(q)}
\sum_{\substack{\chi\bmod q\\ \chi\neq\chi_0}}
\overline{\chi}(a)\mathcal{S}_\chi(X).
\end{equation}

Next, let $q=p^\gamma$ with $\gamma\geq2$. The non-principal
imprimitive characters modulo $q$ are precisely the lifts of the
non-principal characters modulo $q/p$. Therefore, their total
contribution to \eqref{character-decomposition} is
\[
\frac{\varphi(q/p)}{\varphi(q)}
\Delta\left(X;a,\frac{q}{p}\right)
=
\frac{1}{p}\Delta\left(X;a,\frac{q}{p}\right).
\]
For convenience, we define
\[
\Delta^{\ast}(X;a,q):=\frac{1}{\varphi(q)}\,\,
\sideset{}{^\ast}\sum_{\chi\bmod q}
\overline{\chi}(a)\mathcal{S}_\chi(X),
\]
where the superscript $\ast$ indicates that the summation is
restricted to primitive characters.  We have therefore obtained
\begin{equation}\label{primitive-decomposition}
\Delta(X;a,q)=\Delta^{\ast}(X;a,q)+\frac{1}{p}\Delta\left(X;a,\frac{q}{p}\right),
\qquad \gamma\geq 2.
\end{equation}
The second term in \eqref{primitive-decomposition} can be handled
by induction on $\gamma$ (The base case is established in Remark \ref{remark 2} below). Indeed, assuming the desired estimate \eqref{main result} for
$q/p$, we obtain
\[
\begin{split}
\frac{1}{p}\Delta\left(X;a,\frac{q}{p}\right)
&\ll_{f,\eta,A}\frac{1}{p}\,\left(\frac{X}{q/p}\right)^{1-\delta}
=p^{-\delta}\left(\frac{X}{q}\right)^{1-\delta}
\ll\left(\frac{X}{q}\right)^{1-\delta}.
\end{split}
\]
Thus we are led to estimate $\Delta^{\ast}(X;a,q)$.
By Mellin inversion
\bna
V(x)=\frac{1}{2 i\pi}\int_{(1+\frac{1}{10})}\widetilde{V}(s)x^{-s}\mathrm{d}s,
\ena
where $\widetilde V(s)=\int_0^\infty V(x)x^{s-1}\mathrm{d}x$
denotes the Mellin transform of $V$, we get that 
\begin{equation*}
\begin{split}
\Delta^{\ast}(X;a,q)=\frac{1}{\varphi(q)}\,\,
\sideset{}{^\ast}\sum_{\chi\bmod  q}\overline{\chi}(a)
\frac{1}{2 i\pi}\int_{(1+\frac{1}{10})}\widetilde{V}(s)\frac{\Lambda((1\boxplus f)\times\chi,s)}
{L_\infty(s)}\left(\frac{X}{q^{3/2}}\right)^s\mathrm{d}s.
\end{split}\end{equation*}
Applying the functional equation \eqref{FE}, and making the change of variable
$s\mapsto 1-s$, we get that $\Delta^{\ast}(X;a,q)$ equals 
\begin{equation*}
\begin{split}
&\frac{q^{3/2}}{\varphi(q)}\epsilon(f)\sideset{}{^\ast}\sum_{\chi\bmod  q}
\overline{\chi}(a) i^{-\kappa_\chi}\epsilon_\chi^3\,\frac{1}{2 i\pi}\int_{(1+\frac{1}{10})}\widetilde{V}(s)
L((1\boxplus \overline{f})\times \overline{\chi},1-s)
\frac{L_{\infty,\kappa_\chi}(1-s)}
{L_{\infty,\kappa_\chi}(s)}\left(\frac{X}{q^3}\right)^s\mathrm{d}s\\
=&\frac{X}{q^{3/2}\varphi(q)}\epsilon(f)
\sideset{}{^\ast}\sum_{\chi\bmod  q}
\overline{\chi}(a) i^{-\kappa_\chi} \epsilon_\chi^3\,\frac{1}{2 i\pi}\int_{(-\frac{1}{10})}
L((1\boxplus \overline{f})\times \overline{\chi},s)
\frac{L_{\infty,\kappa_\chi}(s)}{L_{\infty,\kappa_\chi}(1-s)}\widetilde
V(1-s)\left(\frac{X}{q^3}\right)^{-s}\mathrm{d}s
\end{split}\end{equation*}
Shifting the contour back to $\Re(s)=\frac{3}{2}$
without going through any poles we obtain
\begin{equation}\label{primitive-dual-expression}
\begin{split}
\Delta^{\ast}(X;a,q)=\frac{X}{q^{3/2}\varphi(q)}\epsilon(f)
\sum_{n\geq1}\lambda_{1\boxplus\overline f}(n)
\sideset{}{^\ast}\sum_{\chi\bmod q}
\overline{\chi}(an)i^{-\kappa_\chi}\epsilon_\chi^3
\widecheck V_{\kappa_\chi}
\left(\frac{n}{q^3/X}\right),
\end{split}
\end{equation}
where, for $\kappa\in\{0,1\}$,
\begin{equation*}\label{dual-weight-functions}
\widecheck V_\kappa(y)
:=\frac{1}{2\pi i}\int_{(\frac{3}{2})}\frac{L_{\infty,\kappa}(s)}{L_{\infty,\kappa}(1-s)}
\widetilde V(1-s)y^{-s}\,\mathrm{d}s.
\end{equation*}
Both $\widecheck V_0$ and $\widecheck V_1$ decay rapidly as
$y\to\infty$.
\begin{remark}\label{remark 2}
We now verify the base case needed for the induction in
\eqref{primitive-decomposition}. If $p=2$, then
$\Delta(X;a,p)=0$. Suppose that $p$ is odd. Since $\gamma\geq2$
and
\[
q=p^\gamma\leq X^{1/2+7/458-\eta},
\]
we have
\[
p\leq q^{1/2}\leq X^{1/4+7/916-\eta/2},
\qquad
\frac{p^3}{X}\leq X^{-52/229-3\eta/2}<1.
\]
Every non-principal character modulo $p$ is primitive. Applying
Mellin inversion and the functional equation to each
$\mathcal S_\chi(X)$, as in the derivation of
\eqref{primitive-dual-expression}, and using the rapid decay of the
dual weights, we obtain, for every $B>0$,
\[
\begin{split}
\Delta(X;a,p)
&\ll_{f,V,B}\frac{X}{p^{3/2}}\sum_{n\geq1}
|\lambda_{1\boxplus\overline f}(n)|
\left(1+\frac{nX}{p^3}\right)^{-B-2}\ll_{f,V,B}
\frac{X}{p^{3/2}}\left(\frac{p^3}{X}\right)^B,
\end{split}
\]
where we used
$|\lambda_{1\boxplus\overline f}(n)|\leq \tau_3(n)$.
Consequently, by choosing $B$ sufficiently large, we obtain
\[
\Delta(X;a,p)\ll_{f,V,\eta,A}X^{-A}
\]
for every $A>0$. This proves the required base case.
\end{remark}
For $(t,q)=1$, set
\[
\mathcal T_q(t)
:=\sideset{}{^\ast}\sum_{\chi\bmod q}
\overline{\chi}(t)\epsilon_\chi^3.
\]
Since $q=p^\gamma$ with $\gamma\geq2$, the Gauss sum of every
imprimitive character modulo $q$ vanishes. We may therefore extend
the above summation to all characters modulo $q$. 
The orthogonality of characters then gives
\begin{equation}\label{Gauss-Kloosterman-identity}
\begin{split}
\mathcal T_q(t)&=q^{-3/2}\sum_{x,y,z\in(\mathbb Z/q\mathbb Z)^\times}
e\left(\frac{x+y+z}{q}\right)\sum_{\chi\bmod q}\chi(xyz)\overline{\chi}(t)\\
&=\varphi(q)q^{-3/2}\sum_{\substack{x,y,z\in(\mathbb Z/q\mathbb Z)^\times\\xyz\equiv t\bmod q}}e\left(\frac{x+y+z}{q}\right)
=\varphi(q)q^{-1/2}\Kl_3(t;q).
\end{split}
\end{equation}
If $(t,q)>1$, both sides of
\eqref{Gauss-Kloosterman-identity} vanish. Thus
\eqref{Gauss-Kloosterman-identity} is valid for every integer $t$.
Separating the even and odd characters in
\eqref{primitive-dual-expression}, we obtain
\[
\begin{split}
\sideset{}{^\ast}\sum_{\chi\bmod q}
\overline{\chi}(t)i^{-\kappa_\chi}\epsilon_\chi^3
\widecheck V_{\kappa_\chi}(y)=
\frac{\mathcal T_q(t)+\mathcal T_q(-t)}{2}\widecheck V_0(y)
-\frac{i\bigl(\mathcal T_q(t)-\mathcal T_q(-t)\bigr)}{2}\widecheck V_1(y).
\end{split}
\]
It follows from \eqref{Gauss-Kloosterman-identity} that
\begin{equation*}\label{primitive-Voronoi}
\begin{split}
\Delta^{\ast}(X;a,q)
=&\frac{X}{2q^2}\epsilon(f)\sum_{n\geq1}\lambda_{1\boxplus\overline f}(n)
\Bigg(\big(\Kl_3(an;q)+\Kl_3(-an;q)\big)\widecheck V_0\left(\frac{n}{q^3/X}\right)
\\&\hspace{36mm}
-i\big(\Kl_3(an;q)-\Kl_3(-an;q)\big)\widecheck V_1\left(\frac{n}{q^3/X}\right)\Bigg).
\end{split}
\end{equation*}

In conclusion, to prove Theorem \ref{Main theorem}, it suffices to show that for some $\delta>0$,
\begin{equation*}
    \label{MNsums1}
    \frac{X}{q^{2}}\epsilon(f)
\sum_{n\geq 1}\lambda_{1\boxplus \overline{f}}(n)
\mathrm{Kl}_3(an;q)
\widecheck{V}\left(\frac{n}{q^3/X}\right)\ll_{f,V,\eta, p}(X/q)^{1-\delta},
\end{equation*}
where $\widecheck V$ is a rapidly decreasing function.
Combined with \eqref{a1}, we have
\begin{equation*}
\begin{split}
\frac{X}{q^{2}}\epsilon(f)
\sum_{n\geq 1}\lambda_{1\boxplus \overline{f}}(n)
\mathrm{Kl}_3(an;q)
\widecheck{V}\left(\frac{n}{q^3/X}\right)=
\frac{X}{q^2}\epsilon(f)\sum_{l,m\geq 1}\lambda_{\overline{f}}(m)
\mathrm{Kl}_3(al m;q)\widecheck{V}\left(\frac{l m}{q^3/X}\right).
\end{split}
\end{equation*}
Let $V_i(y)\in \mathcal{C}_c^\infty(1/2, 5/2), i=1,2$, be a smooth supported function satisfying $V_i(y)=1$ for
$y\in [1,2]$ and $V_i^{(j)}(y)\ll_j 1$ for any integer $j\geq 0$.
By inserting a dyadic partition of unity for the $l$-sum and $m$-sum, respectively, we are reduced to considering sums of the form
\begin{equation}
    \label{MNsums}
   \mathcal{B}(L,M) := \frac{X}{q^2}\epsilon(f)\sum_{l\geq 1}\sum_{m\geq 1}
    \lambda_{\overline{f}}(m)\mathrm{Kl}_3(al m;q)V_1\big(\frac{l}{L}\big)V_2\big(\frac{m}{M}\big)
\end{equation}
for $O(\log^2 X)$ many real numbers $L,M\geq 1$ satisfying
\begin{equation}
    \label{LMupperbound}
    LM\ll \frac{q^3}{X}.
\end{equation}
According to \eqref{a5} and the Rankin--Selberg bound, we obtain that \eqref{MNsums} can be trivially bounded by
\begin{equation*} \label{MNboundtrivial}
    \frac{X^{1+o(1)}}q\big(\frac{LM}{q}\big),
\end{equation*}
which is good enough if $q\leq X^{1/2-\eta}$,
and that henceforth one assumes that $q\geq X^{1/2-\eta}$.
From this, we may assume that
\begin{equation*}\label{LMlower}
    LM\geq q^{1-\eta}
\end{equation*}
for some fixed $\eta>0$ that can be chosen as small as necessary. In particular
$$L+M\geq q^{1/2-\eta}.$$
To obtain nontrivial cancellation for the sum
\eqref{MNsums}, we split the argument into several cases.

\subsection{The case of small $L$}
Recall $LM\ll q^3/X$ in \eqref{LMupperbound} and $L\geq 1$.
If $L$ is small, we directly apply the following result.
In \cite[Theorem 1.3]{Sharma}, setting $\mathcal{N}=\{1\}$ and $\alpha_1=1$, we obtain
\begin{lemma}\label{sharma}
Let $q=p^\gamma$ with $p$ an odd prime and $\gamma \geq 2$. Assume $(l,q)=1$. We have
\bna
\sum_{m \geq 1}\lambda_f(m)\Kl_3(ml; q) V_2(\frac{m}{M}) 
\ll p^{7/12}M^{1/2}q^{1/3+\varepsilon}\bigg(1+\frac{M}{q}\bigg)^{2/3}+q^{13/20+\varepsilon}.
\ena
\end{lemma}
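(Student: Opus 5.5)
This lemma is a direct specialization of \cite[Theorem 1.3]{Sharma}, so the plan is to check that our sum is exactly an instance of Sharma's bilinear bound with a one-point outer set. Sharma's theorem bounds sums of the shape
\[
\sum_{n\in\mathcal N}\alpha_n\sum_{m\geq1}\lambda_f(m)\Kl_3(nm\,b;q)V_2\Big(\frac{m}{M}\Big)
\]
for $q=p^\gamma$ with $p$ odd and $\gamma\geq2$, with $b$ a unit mod $q$ and $f$ a fixed level-$1$ holomorphic (or Hecke--Maass) form. The right-hand side depends on $\mathcal N$ only through $|\mathcal N|$ and $\|\alpha\|$.

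\textbf{Step 1: matching the hypotheses.} Take $\mathcal N=\{1\}$ and $\alpha_1=1$, and absorb $l$ into the unit parameter. This is legitimate because $(l,q)=1$, so $ml$ ranges over the same residue structure as $m$ twisted by the unit $l$. I would check three things.
\begin{itemize}
\item Sharma's twisting parameter is allowed to be an arbitrary unit, uniformly in that unit.
\item The normalization of $\Kl_3$ agrees with ours. Both use the factor $q^{-1}$ and the extended definition for $p\mid m$. By Lemma~\ref{lem:Kloosterman=0-initial}, the terms with $p\mid m$ vanish anyway, so any convention mismatch there is harmless.
\item The weight $V_2$ satisfies Sharma's smoothness assumptions.
\end{itemize}

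\textbf{Step 2: reading off the bound.} With $|\mathcal N|=1$ and $\|\alpha\|_2=1$, the general estimate collapses to
\[
p^{7/12}M^{1/2}q^{1/3+\varepsilon}(1+M/q)^{2/3}+q^{13/20+\varepsilon}.
\]
The factor $p^{7/12}$ is harmless, since $p$ is fixed and it reflects the $p$-adic stationary-phase losses. The implied constant depends only on $f$, $V_2$, $p$ and $\varepsilon$.

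\textbf{Main obstacle.} The only real risk is bookkeeping: making sure that Sharma's statement is uniform in the unit $l$ and carries exactly these exponents. If uniformity were not explicit, I would fall back on Sharma's proof. There, a unit twist $l$ changes the phase $3v/q$ with $v^3=ml$ (Lemma~\ref{lem:Kloosterman}) only by multiplying $v$ by a cube root of $l$. Such a rescaling leaves all $p$-adic derivative estimates unchanged, and that is what gives the uniformity.
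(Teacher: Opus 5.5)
Your proposal is correct and follows the paper exactly: the paper's proof is nothing more than the specialization of \cite[Theorem 1.3]{Sharma} to $\mathcal N=\{1\}$, $\alpha_1=1$, with the unit $l$ (or $al$, using $(l,q)=1$) playing the role of Sharma's residue parameter. Your fallback remark is unnecessary, since uniformity in the unit is exactly what the citation supplies. It is also slightly imprecise: when $p\equiv1\bmod3$ or $p=3$, the unit $l$ need not be a cube in $\mathbb{Z}_p^\times$. In that case the twist moves the support condition $ml\in\mathbb{Z}_p^{\times3}$ to a different cubic coset rather than simply rescaling $v$.
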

By Lemmas \ref{lem:Kloosterman=0-initial} and \ref{sharma}, we find that
\begin{align}\label{upbound1}
\mathcal{B}(L, M) & \,\ll_{\overline{f}} \frac{X^{1+o(1)}}{q^2}L
\Bigl(p^{7/12}M^{1/2}q^{1/3}(1+M/q)^{2/3}+q^{13/20}\Bigr) \notag\\
& \, \ll \frac{X^{1+o(1)}}{q}\Bigl(p^{7/12}\frac{L^{1/2}q^{5/6}}{X^{1/2}}+p^{7/12}\frac{q^{13/6}}{X^{7/6}L^{1/6}}
+\frac{L}{q^{7/20}}\Bigr).
\end{align}
In particular, this bound is suitable as long as
$$q\leq \min\{X^{3/5-\eta}L^{-3/5}, X^{7/13-\eta}\}$$
for some fixed $\eta>0$.

\subsection{The case of large $L$}
In this subsection, we fix a parameter $\kappa\in\mathbb{N}$ 
satisfying the condition in Lemma~\ref{Kloosterman sums bound},
and further distinguish two subcases according to whether $q$ is comparable  
in size to $Lp^{\kappa}$; this distinction is motivated by \eqref{condition} below.
\subsubsection{The case of $q \leq Lp^{\kappa}$}
In this case we use the Poisson summation formula in \eqref{MNsums} for $l$-sum and obtain
\begin{align}
\mathcal{B}(L, M) 
&\,=\frac{X}{q^2}\epsilon(f)\sum_{m\geq 1}\lambda_{\overline{f}}(m)
\left(\frac{L}{q}\sum_{\tilde{l}\in \mathbb{Z}}
\delta_{(\tilde l,q)=1}S(1,-am\overline{\tilde{l}};q)
\widehat{V_1}(\frac{\tilde{l}}{q/L})\right)V_2(\frac{m}{M})
\nonumber\\\noalign{\vskip 2,8mm}
&\,\ll_{\overline{f}}
\frac{X^{1+o(1)}}{q}\frac{M}{q^{1/2}}\ll_{\overline{f}} \frac{X^{1+o(1)}}{q}\frac{q^{5/2}}{XL},\label{MNbound2}
\end{align}
 where the estimate draws on Weil's bound of Kloosterman sum.

\subsubsection{The case of $q \geq Lp^{\kappa}$}
In this case, we improve the trivial bound by applying a lemma that employs a $p$-adic exponent datum.
\begin{lemma}\label{Kloosterman sums bound}
Let $q=p^\gamma$, where $p$ is an odd prime, 
and assume $(m,q)=1$. For $m\geq1$ and $V\in C_c^\infty(\mathbb{R}_{>0})$, we define 
$$
S(m,L):=\sum_{l\geq 1}\Kl_3(ml;q)V\Bigl(\frac{l}{L}\Bigr).
$$
Suppose that $ \bigl(k,\ell,r,\delta,(n_0,u_0,\kappa_0,\lambda_0)\bigr) $ 
is a $ p $-adic exponent datum as in Definition~\ref{exponent pair}. 
Then, for every $\kappa\geq
\max\big(\kappa_0(2/3,p), 1+\iota'(2)\big),$
with $r=r(2/3,p,\kappa,\infty)$, assume that
\begin{equation*}
\gamma\geq\max\big(n_0(2/3,p,\kappa,\infty)+\kappa,2\kappa\big),\qquad
p^\kappa\leq L\leq qp^{-\kappa},
\end{equation*}
when $p\geq5$.
When $p=3$, assume that
$$
\gamma\geq\max\big(5,n_0(2/3,3,\kappa,\infty)+\kappa+1,2\kappa+1\big),\qquad
3^{\kappa+1}\leq L\leq q3^{-\kappa}.
$$
Then
$$
S(m,L)\ll_Vp^{r+(1-k-\ell)\kappa}q^kL^{\ell-k}(\log q)^\delta.
$$
\end{lemma}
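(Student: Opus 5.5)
Apply Lemma~\ref{lem:Kloosterman} to $\Kl_3(ml;q)$ for each $l$ coprime to $p$. The terms with $p\mid l$ vanish by Lemma~\ref{lem:Kloosterman=0-initial}; this is where $(m,q)=1$ is used. For $(l,p)=1$ we get
\[
\Kl_3(ml;q)=\sum_{v^3=ml}\epsilon(v,q)\,e\Bigl(\frac{3v}{q}\Bigr).
\]
For $p\geq5$ the number of cube roots is $0$ or $3$, and for $p=3$ it is at most $3$. So it suffices to treat one branch of the cube root. We split $l$ into residue classes $l\equiv l_0 \bmod p^{\kappa}$ with $(l_0,p)=1$ (mod $3^{\kappa+1}$ when $p=3$). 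On each class the existence of a cube root of $ml$ is constant, and for $p=3$ so is the character $\epsilon(v,q)$, since it only depends on $v \bmod 3$. We write $l=l_0+p^{\kappa}t$ and choose a cube root $v_0$ of $ml_0$. The branch is then
\[
v=v_0\bigl(1+p^{\kappa}\overline{l_0}\,t\bigr)^{1/3},
\]
given by the binomial series, which converges $p$-adically because $\kappa\geq1+\iota'(2)$ (and one more power of $3$ when $p=3$). So $3v=3v_0(1+p^\kappa\omega t)^{1/3}$ with $\omega=\overline{l_0}\in\mathbb Z_p^\times$. The number of classes is $O(p^\kappa)$.

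Next we match this phase to Definition~\ref{Definition}. We set
\[
f(t)=3v_0(1+p^{\kappa}\omega t)^{1/3},\qquad\text{so that}\qquad f'(t)=p^{\kappa}v_0\omega(1+p^\kappa\omega t)^{-2/3}.
\]
This has exactly the shape \eqref{ClassFDefinition} with $y=2/3$, $w=\kappa$ (for $p\geq5$; for $p=3$ the factor $3$ shifts $w$ by one), $\omega'=v_0\omega$, $\gamma_0=0$ and $g=0$. Hence $f\in\mathbf F(w,2/3,\kappa,\lambda,u)$ for every $\lambda$ and $u$, which justifies taking $\lambda=\infty$ in the datum. We also have $\iota'(2/3)=0$ for $p\geq5$ and $\iota(2/3)=0$ for $p\neq3$; for $p=3$ we get $\iota=1$, which is absorbed by the extra power of $3$ in the hypotheses. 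The $t$-variable then ranges over an interval of length $H\asymp L/p^\kappa\geq1$.

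Before applying the datum we remove the smooth weight $V(l/L)=V((l_0+p^\kappa t)/L)$ by partial summation, at a cost of $O_V(1)$ times the sup over subintervals. The datum \eqref{SharpDef} with $n=\gamma$ requires $n\geq w+n_0=\kappa+n_0(2/3,p,\kappa,\infty)$, which is the hypothesis on $\gamma$. It also requires $H\leq p^{\gamma-w-\kappa-\iota'}=q p^{-2\kappa}$, which follows from $L\leq qp^{-\kappa}$ and $\gamma\geq 2\kappa$. The bound for each class is
\[
p^{r}\Bigl(\frac{qp^{-2\kappa}}{H}\Bigr)^kH^{\ell}(\log q)^\delta
\asymp p^{r}q^kp^{-\kappa k}L^{\ell-k}p^{-\kappa\ell}(\log q)^\delta .
\]
Multiplying by the $O(p^\kappa)$ classes and the three branches gives $p^{r+(1-k-\ell)\kappa}q^kL^{\ell-k}(\log q)^\delta$, as claimed.

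The main obstacle is the bookkeeping in the $p$-adic setup: checking that the binomial expansion of $(1+p^\kappa\omega t)^{1/3}$ really lies in $\mathbb Q_p^\times\mathbf I_0(\mathbb Z_p)$ with the correct normalisation of $w$, and that the exponents $\kappa,\iota,\iota'$ line up, especially for $p=3$. There the extra factor $3$ and the $3$-adic behaviour of $y=2/3$ force the shifted conditions $3^{\kappa+1}\leq L$ and $\gamma\geq 2\kappa+1$. I would first handle $p\geq5$ cleanly and then redo the exponent count for $p=3$ with $w=\kappa+1$.
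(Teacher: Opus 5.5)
Your proposal is correct and follows essentially the same route as the paper. You split $l$ into classes modulo $p^{\kappa}$ (modulo $3^{\kappa+1}$ when $p=3$), write each cube-root branch as $3v_0(1+p^{\kappa}\omega t)^{1/3}\in\mathbf F(\kappa,2/3,\kappa,\infty,\infty)$ (with $w=\kappa+1$, $\iota=1$ when $p=3$), remove the weight by partial summation, apply the datum on intervals of length $H\asymp L/p^{\kappa}\le qp^{-2\kappa}$, and multiply by the $O(p^{\kappa})$ classes and at most three branches.

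The differences are cosmetic. You discard $p\mid l$ explicitly via Lemma~\ref{lem:Kloosterman=0-initial}, and you keep $t=0$ in the sum, which avoids the paper's $O(p^{\kappa})$ remainder. One small slip: your count of ``$0$ or $3$'' cube roots for $p\geq5$ is wrong for $p\equiv2\bmod 3$, where there is exactly one. This is harmless, since you only use the bound of at most three branches.
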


\begin{proof}
We closely follow the proof presented in~\cite[Theorem 6]{MD}.
First, we treat the hyper-Kloosterman sum $\Kl_3(ml;q)$. According to Lemma~\ref{lem:Kloosterman},
for prime power $q = p^\gamma$ and $p \nmid ml$, $\Kl_3(ml;q)=0$ unless $ml\in \mathbb{Z}_p^{\times 3}$, in which case we have the explicit formula
\[
\Kl_3(ml;q) = \sum_{\substack{v \in \mathbb{Z}_p \\ v^3 = ml}} \epsilon(v,q) e\left(\frac{3v}{q}\right).
\]
Suppose now that $v^3=ml \in \mathbb{Z}_p^{\times}$. Then the number of solutions of $v^3=ml$ in $\mathbb{Z}_p^\times$ depends on $p\bmod3$:
\begin{itemize}
\item[-]
exactly 3 solutions if $p\equiv 1 \bmod 3$;
\vskip 2mm
\item[-]
exactly 1 solution if $p\equiv 2 \bmod 3$.
\end{itemize}
Let
$$
d_p=
\begin{cases}
3, & p\equiv1\bmod3,\\
1, & p\equiv2\bmod3\quad \text{or}\,\, p=3.
\end{cases}
$$
If \(p\equiv1\pmod3\), let \(u_0\in\mathbb Z_p^\times\) be a fixed
primitive cube root of unity; otherwise, put \(u_0=1\).
For every \(x\in\mathbb Z_p^{\times3}\), fix a cube root
\(s(x)\in\mathbb Z_p^\times\) satisfying $s(x)^3=x.$
When \(p\neq3\), Hensel's lemma implies that the solutions of
\(v^3=x\) in \(\mathbb Z_p^\times\) are precisely
$$
v=s(x)u_0^j,\qquad 0\le j\le d_p-1.
$$
When \(p=3\), the cubing map sends \(1+3\mathbb Z_3\) bijectively
onto \(1+9\mathbb Z_3\). Hence every
\(x\in\mathbb Z_3^{\times3}\) has a unique cube root in
\(\mathbb Z_3^\times\), so the same notation is valid with \(d_3=1\).
Consequently, whenever \(ml\in\mathbb Z_p^{\times3}\),
$$
\Kl_3(ml;p^\gamma)=\sum_{0\le j\le d_p-1}
\epsilon\left(s(ml)u_0^j,p^\gamma\right)
e\left(\frac{3s(ml)u_0^j}{p^\gamma}\right),
$$
where in the case \(p=3\) the sum consists of a single term.

Next, for $p\geq 5$, we proceed by splitting the sum into residue classes modulo $p^{\kappa}$ for a suitable $\kappa\leq\gamma$. 
For each $c$ with $1\leq c\leq p^\kappa$ and $p\nmid c$, let
$c'\in\mathbb Z_p^\times$ denote the $p$-adic inverse of $c$, so that
$cc'=1.$
Since \(p\neq3\), the cubing map is an automorphism of
\(1+p^\kappa\mathbb Z_p\). Hence
$m(c+p^\kappa t)\in\mathbb Z_p^{\times3}\Longleftrightarrow mc\in\mathbb Z_p^{\times3},$
and, in this case, its cubic roots are
$$
s(mc)u_0^j(1+p^\kappa c't)^{1/3}, \qquad 0\leq j\leq d_p-1.
$$
This yields the decomposition
\begin{align}\label{SumAspAdic}
S(m,L)&\,=\sum_{0 \leq j \leq d_p - 1}\sum_{\substack{1 \leq c \leq p^\kappa\\ p \nmid c\\ mc\in\mathbb Z_p^{\times3}}} \epsilon(s(mc)u_0^j,p^\gamma)
\sum_{l=0}^\infty e\left(\frac{f_{c, j}(l)}{p^\gamma}\right) V\left(\frac{c + p^\kappa l}{L}\right) \notag\\
&\,= \sum_{0 \leq j \leq d_p - 1}\sum_{\substack{1 \leq c \leq p^\kappa\\ p \nmid c\\ mc\in\mathbb Z_p^{\times3}}}\epsilon(s(mc)u_0^j,p^\gamma)
\sum_{l=1}^\infty e\left(\frac{f_{c, j}(l)}{p^\gamma}\right) V\left(\frac{c + p^\kappa l}{L}\right) + O(p^{\kappa}),
\end{align}
where the phase is given by
\begin{equation*}
f_{c, j}(l) = 3 s(mc) u_0^{j} (1 + p^{\kappa} c' l)^{1/3}.
\end{equation*}
Note that for any $\omega\in\mathbb{Z}^{\times}_p$, the function $(1+p^{\kappa}\omega l)^{1/3}$ lies in $1+p^{\kappa}\mathbf{I}_0(\mathbb{Z}_p)$. 
Moreover, its derivative is given by $\left[(1+p^{\kappa}\omega l)^{1/3}\right]'=\frac{1}{3}p^{\kappa}\omega\big(1+p^{\kappa}\omega l\big)^{-2/3}$. 
Therefore, $(1+p^{\kappa}\omega l)^{1/3}$ belongs to the class $\mathbf{F}(\kappa,2/3,\kappa,\infty,\infty,\omega,\omega/3)$.
Here, the symbol $\infty$ is used in place of $\lambda$ and $u$ to indicate that $f$ satisfies Definition~\ref{Definition} for arbitrarily large values of $\lambda$ and $u$. This means that condition \eqref{ClassFDefinition} holds with $g=0$. 
In particular, we have that the phase $f_{c, j}(l) = 3 s(mc) u_0^{j} (1 + p^{\kappa} c' l)^{1/3}$ satisfies
\begin{equation*}
f_{c, j}(l) \in \mathbf{F}(\kappa, 2/3, \kappa,\infty, \infty, c', s(mc) u_0^{j}c').
\end{equation*}

We estimate the innermost sum in \eqref{SumAspAdic} using summation by parts and denote it by $S(c,j)$.  
Define  
\begin{equation*}
\tilde{S}(t) := \sum_{1 \le l \le t} e\left(\frac{f_{c,j}(l)}{q}\right),
\end{equation*}
if $ t \geq 1 $, and $ \tilde{S}(t) = 0 $ for $ t < 1 $. 
Since
$$ \gamma-\kappa\geqslant n_0,\quad \kappa\geqslant\kappa_0, $$
we apply the $p$-adic exponential datum from~\eqref{SharpDef} to bound $\tilde{S}(t)$ and obtain
\begin{equation}\label{St bound}
\tilde{S}(t)\ll p^r\left(\frac{p^{\gamma-2\kappa}}{t}\right)^k t^{\ell}(\log q)^{\delta}
\end{equation}
for \(1 \leq t \leq p^{\gamma-2\kappa}\), and, more generally, for all \(t \geq 0\),
\[
\tilde{S}(t) \ll p^r \left( p^{(\gamma-2\kappa)k} t^{\ell-k} + \frac{t}{p^{(\gamma-2\kappa)(1-\ell)}} \right) (\log q)^\delta.
\]
Using summation by parts, we obtain
\[
\begin{aligned}
S(c,j) = & \, \int_{1-0}^\infty V\left(\frac{c + p^\kappa t}{L}\right) \dd \tilde{S}(t) = V\left(\frac{c + p^\kappa t}{L}\right) \tilde{S}(t) 
\bigg|_{1-0}^\infty - \int_{1}^\infty \tilde{S}(t) \dd V\left(\frac{c + p^\kappa t}{L}\right) \\
\ll & \, p^r (\log q)^\delta \int_1^\infty \left( p^{(\gamma -2\kappa)k} t^{\ell-k} + \frac{t}{p^{(\gamma-2\kappa)(1-\ell)}} \right) \frac{p^{\kappa}}{L}
\left| V' \left( \frac{c+p^\kappa t}{L} \right) \right| \dd t.
\end{aligned}
\]
Introducing a substitution $t=(L\xi-c)p^{-\kappa}$, we find that
\[
S(c,j) \ll p^{r} (\log q)^{\delta} \int_{\xi_0}^{\infty} \left( p^{(\gamma-2\kappa)k}(Lp^{-\kappa})^{\ell-k} 
\xi^{\ell-k} + p^{(\gamma-2\kappa)(\ell-1)}(Lp^{-\kappa}) \xi \right)  |V'(\xi)|  \dd\xi,
\]
where $ \xi_0 \ge p^{\kappa}L^{-1} $. 
We thus have that
\[
\begin{aligned}
S(c,j) &\ll p^{r} (\log q)^\delta \Bigl( p^{(\gamma-\kappa)k-\kappa\ell} L^{\ell-k}+ p^{(\gamma-2\kappa)(\ell-1)}(Lp^{-\kappa})  \Bigr) \\
&\ll p^{r-(k+\ell)\kappa} (p^\gamma)^k  L^{\ell-k} (\log q)^{\delta},
\end{aligned}
\]
since 
\begin{equation}\label{condition}
p^{\gamma-2\kappa}\geq Lp^{-\kappa}.
\end{equation}

Going back to~\eqref{SumAspAdic}, we have that
\begin{equation*}
S(m,L) \ll p^{r+(1-k-\ell)\kappa} (p^\gamma)^k  L^{\ell-k} (\log q)^{\delta} + p^\kappa.
\end{equation*}
Since the estimate~\eqref{St bound} holds for all $1 \leq t \leq p^{\gamma-2\kappa}$, 
we know from~\cite[Eq.\,(16)]{MD} that its right-hand side is greater than $t^{1/2}$ throughout the same range. 
In particular, for $t=Lp^{-\kappa}$, we find that
\begin{equation*}
p^r\left(\frac{p^{\gamma-2\kappa}}{Lp^{-\kappa}}\right)^k 
(Lp^{-\kappa})^{\ell}(\log q)^{\delta} \geq (Lp^{-\kappa})^{1/2},
\end{equation*}
from which it follows that the first term dominates in our estimate of $S(m,L)$. 

Finally, we find that the case \(p=3\) requires only a slight modification of the above argument. In this case we split the \(l\)-sum into residue classes modulo
\(3^{\kappa+1}\), rather than modulo \(3^\kappa\), and write $l=c+3^{\kappa+1}t.$
Since $(1+3\mathbb Z_3)^3=1+9\mathbb Z_3,$
the factor \(1+3^{\kappa+1}c't\) has a unique cubic root in
\(1+3\mathbb Z_3\). The corresponding phase function
$$
f_c(t)=3s(mc)\left(1+3^{\kappa+1}c't\right)^{1/3}
$$
satisfies
$$
f_c\in\mathbf F\Bigl(\kappa+1,\frac23,\kappa,\infty,\infty,c',s(mc)c'\Bigr),
$$
since \(\iota(2/3)=1\) for \(p=3\). Moreover,
\bna
(1+3^{\kappa+1}c't)^{1/3}\equiv1\pmod3,
\ena
and hence the factor $\epsilon(v,3^\gamma)$ is constant with respect
to $t$ on each residue class.
Therefore the same argument applies
with \(3^{\gamma-2\kappa}\) replaced by
\(3^{\gamma-2\kappa-1}\), yielding the same bound up to an absolute
constant. Thus the conclusion remains valid for \(p=3\), after the
corresponding harmless adjustment of the lower bounds on \(\gamma\)
and \(L\).
\end{proof}

In the remainder of this section, we describe explicit $p$-adic exponent data and apply them to estimate the bound of $\mathcal{B}(L, M)$.
Using Lemma~\ref{Kloosterman sums bound} and applying the datum $ABA^3B(\omega_{01})$ 
with $\kappa=1$, $r=0$ and $p\not\in\{3,5,7, 11, 13,17,19\}$ yields the estimate
\begin{align}\label{upbound2}
\mathcal{B}(L, M)
\ll &\,\frac{X^{1+o(1)}}{q^2}p^{7/41}M\Bigl(\frac{q}{L}\Bigr)^{11/82}L^{57/82} \nonumber\\
\ll &\, \frac{X^{1+o(1)}}{q}p^{7/41}\frac{q^{175/82}}{XL^{36/82}}.
\end{align}

This bound~\eqref{upbound2} holds for
$$q \leq L^{36/175}X^{82/175-\eta},$$
with $\eta>0$ fixed.
Since the $A$- and $B$-processes produce $p$-adic exponent data that are effective for every prime $p$, 
the resulting bound also holds for $p\in\{3,5,7, 11, 13,17,19\}$ when $\gamma \geqslant n_0$, 
albeit with possibly different values of $r$ and $n_0$.
Moreover, after a suitable adjustment of the parameters, the same type of
bound applies for all sufficiently large $\gamma$. The finitely many
remaining values of $\gamma$ are treated directly as above.

\subsection{The case of moderate $L$}
We can apply Cauchy--Schwarz with $l$ inside the square and open the absolute value square. 
Therefore, $\mathcal{B}(L, M)$ is bounded by
\bea
\frac{XM^{1/2}}{q^2}\left(\sum_{l_1,l_2\geq 1}V_1\big(\frac{l_1}{L}\big)
\overline{V_1\big(\frac{l_2}{L}\big)}\sum_{m\geq 1}
\mathrm{Kl}_3(al_1 m;q)\overline{\mathrm{Kl}_3(al_2 m;q)}
V_2\big(\frac{m}{M}\big)\right)^{1/2}.\label{c-sum}
\eea
We consider two subcases.
\subsubsection{$l_1= l_2:=l$}
If $l_1= l_2=l$, the sum inside the parentheses above can be simply bounded by
\begin{equation}\label{l1=l2}
\begin{split}\sum_{l\geq 1}|V_1\big(\frac{l}{L}\big)|^2\sum_{m\geq 1}
|\mathrm{Kl}_3(al m;q)|^2V_2\big(\frac{m}{M}\big)\ll LM.
\end{split}\end{equation}
\subsubsection{$l_1 \neq l_2$}
If $l_1\neq l_2$, things get a little more complicated.
We apply the Poisson summation formula for the $m$-sum in \eqref{c-sum}, getting
\begin{equation*}\label{aa}
\sum_{m\geq 1}\mathrm{Kl}_3(al_1 m;q)
\overline{\mathrm{Kl}_3(al_2 m;q)}V_2\big(\frac{m}{M}\big)
=\frac{M}{q^{1/2}}\sum_{\tilde{m}\in \mathbb{Z}}
\mathcal{C}_{a}(\tilde{m},l_1,l_2;q)\widehat{V_2}\bigg(\frac{\tilde{m}}{q/M}\bigg)
\end{equation*}
where
\begin{equation*}\begin{split}\mathcal{C}_{a}(\tilde{m},l_1,l_2;q)&=\frac{1}{q^{1/2}}
\sum_{\gamma\in(\mathbb{Z}/q\mathbb{Z})}
\mathrm{Kl}_3(al_1 \gamma;q)\overline{\mathrm{Kl}_3(al_2 \gamma;q)}
e\bigg(\frac{\gamma \tilde{m}}{q}\bigg)
\\
&=\frac{1}{q^{3/2}}\,\,\sum_{x\bmod q\atop(x,q)=1}S(1,\overline{x}; q)
\overline{S}(1,\overline{(l_1\overline{l_2}x+\widetilde{m}\overline{al_2})}; q).
\end{split}
\end{equation*}
This type of sum has already been studied in \cite{DF1997}, and we quote the required estimate directly. 
\begin{lemma}
For $a \in \mathbb{Z}_p^{\times}$, $b \in \mathbb{Z}_p$ and $\gamma\geq1$, we have
\begin{equation*}
\sum_{x\bmod p^\gamma\atop(x,p)=1}S(1,x; p^\gamma)
 \overline{S}(1,ax\overline{(bx+1)}; p^\gamma)
 \ll p^{3\gamma/2}p^{(\min\{\gamma, \nu_p(a-1),\nu_p(b)\})/2}.
\end{equation*}
\end{lemma}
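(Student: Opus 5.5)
Since the authors say they quote this estimate from D\c{a}browski--Fisher, the plan is to reprove it by stationary phase modulo prime powers, using the explicit evaluation of Kloosterman sums modulo $p^\gamma$.

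\textbf{Small $\gamma$.} For $\gamma=1$ the claim is the square-root bound $\ll p^{3/2}$ (the exponent $\min\{\cdot\}/2\le 1/2$ only helps). This follows from Deligne's theorem: the sum is a correlation of two Kloosterman sheaves under the fractional linear change of variables $x\mapsto ax/(bx+1)$. That change of variables is not the identity unless $a=1$ and $b=0$, so the sum is $\ll p^{3/2}$. In the exceptional case $a\equiv 1$, $b\equiv 0$ the trivial bound $p\cdot p=p^2=p^{3/2}p^{1/2}$ already suffices. For each bounded $\gamma$ the bound holds trivially up to a constant, so we may assume $\gamma\ge 2$ (and $\gamma$ larger than a small constant when $p=3$).

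\textbf{Evaluation for $\gamma\ge2$.} For $\gamma\ge2$ and $p$ odd we use the classical formula
\[
S(1,x;p^\gamma)=p^{\gamma/2}\sum_{y^2\equiv x}\epsilon_y\, e\Bigl(\frac{2y}{p^\gamma}\Bigr),
\]
with $y$ running over the square roots of $x$ modulo $p^\gamma$ (lifted suitably) and $\epsilon_y$ a root of unity depending only on $y$ modulo $p$. This vanishes unless $x$ is a square. Writing $x=y^2$, the second argument becomes a square $z^2$, with $z$ determined by
\[
z^2=\frac{ay^2}{by^2+1}.
\]
The sum then becomes
\[
p^{\gamma}\sum_{y}\pm e\Bigl(\frac{2(y-z(y))}{p^\gamma}\Bigr),
\qquad z(y)=\sqrt a\, y\,(by^2+1)^{-1/2}.
\]
Here the sum runs over $y$ modulo $p^\gamma$ up to the finitely many branches. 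We need this inner sum to be $\ll p^{\gamma/2+\mu/2}$, where $\mu=\min\{\gamma,\nu_p(a-1),\nu_p(b)\}$.

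\textbf{Stationary phase.} Split $y=y_0+p^{\lceil\gamma/2\rceil}t$ and Taylor-expand the phase $h(y)=y-z(y)$; the series has $p$-adically integral coefficients since $by^2+1$ and $a$ are units on the relevant set. The $t$-sum is then a complete linear sum modulo $p^{\lfloor\gamma/2\rfloor}$, which forces
\[
h'(y_0)\equiv 0 \pmod{p^{\lfloor\gamma/2\rfloor}}.
\]
We compute $h'(y)=1-\sqrt a\,(by^2+1)^{-3/2}$. The congruence $h'(y)\equiv0 \pmod{p^j}$ means $(by^2+1)^3\equiv a\pmod{p^j}$, a condition on $by^2$.

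\textbf{Counting (main obstacle).} The obstacle is counting solutions $y_0$ modulo $p^{\lceil\gamma/2\rceil}$, uniformly in $a$ and $b$. If $\nu_p(b)=\beta$ is small, then $y\mapsto by^2$ is at most $2$-to-$1$ modulo $p^{j-\beta}$ on units. So the solution set has size $\ll p^{\min(\beta,j)}$ in the worst case, giving a total of $p^{\lfloor\gamma/2\rfloor}\cdot\#\{y_0\}\ll p^{\gamma/2+\beta/2}$ after balancing $j$ against $\gamma/2$. If instead $\nu_p(a-1)$ is small while $\beta$ is large, then $(by^2+1)^3\equiv1\not\equiv a$ and there are no solutions beyond level $\nu_p(a-1)$, which gives the $\nu_p(a-1)$ bound. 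To get the exponent $\mu/2$ rather than $\mu$, we need a second-derivative (quadratic Gauss sum) analysis at degenerate critical points. Concretely, we evaluate the $t$-sum as a Gauss sum of modulus $p^{\gamma-2\nu}$ when $h''$ has valuation $\nu$, which yields square-root cancellation in the remaining variable. Summing over branches gives $p^{\gamma}\cdot p^{\gamma/2+\mu/2}$, as claimed.
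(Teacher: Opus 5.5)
\textbf{There is a genuine gap, at exactly the step you call the main obstacle.} The paper does not prove this lemma; it quotes it from D\c{a}browski--Fisher. Your reduction is sound: the Sali\'e-type evaluation, then a sum over $y$ of $e(2h(y)/p^\gamma)$ with $h(y)=y-z(y)$, one branch of $z$ at a time, restricted to classes where $a/(by^2+1)$ is a square. The case $\beta=\nu_p(b)=0$ is non-degenerate and your first-derivative argument handles it.

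The trouble is the $\mu/2$ saving. With the split $y=y_0+p^{\lceil\gamma/2\rceil}t$, your own count allows up to $\asymp p^{\min(\beta,\lfloor\gamma/2\rfloor)}$ admissible $y_0$. That yields at best $p^{\gamma/2+\mu}$, not $p^{\gamma/2+\beta/2}$ as you write. Worse, if $\beta>\alpha:=\nu_p(a-1)\ge\lfloor\gamma/2\rfloor$, then $\nu_p(h'(y))=\alpha$ for every $y$. So every $y_0$ is admissible and you only recover the trivial bound $p^\gamma$, whereas $p^{(\gamma+\alpha)/2}$ is needed; in fact the sum is $0$ when $\alpha\le\gamma-2$.

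Your proposed remedy does not repair this.
\begin{itemize}
\item For $\alpha<\beta$, $h'$ has no zero in $\mathbb{Z}_p$, so there are no critical points for a Gauss sum to act on.
\item For $\alpha\ge\beta$ with $\gamma/4<\beta<\gamma/2$, the admissible $y_0$ form cosets $y_*+p^{\lfloor\gamma/2\rfloor-\beta}s$. On these, the cubic Taylor term (coefficient $p^\beta$ times a unit) still survives modulo $p^\gamma$, so the $s$-sum is not a quadratic Gauss sum.
\item The quadratic part lives modulo $p^{\gamma-\beta}$ in $y$, not modulo $p^{\gamma-2\beta}$.
\end{itemize}

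\textbf{The missing idea is a global factorization of the phase.} For $\beta\ge1$ one has $\sqrt a\in\mathbb{Z}_p$; otherwise the second Kloosterman sum vanishes identically. The binomial expansion of $(1+by^2)^{-1/2}$ gives
\[
h(y)=(1-\sqrt a)\,y+b\,G(y),\qquad G(y)=\sqrt a\Bigl(\tfrac12y^3-\tfrac38by^5+\cdots\Bigr),
\]
with $G$ an integral power series and $G''(y)=\sqrt a\,(3y+O(b))$ a unit for $p\neq3$.

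Any branch with $1-\sqrt a$ a unit has unit derivative and contributes $0$, since $\gamma\ge2$. So assume $\sqrt a\equiv1\bmod p$, in which case $\nu_p(1-\sqrt a)=\alpha$.
\begin{itemize}
\item If $\alpha\ge\beta$, the whole phase is $p^\beta$ times an integral phase. The $y$-sum modulo $p^\gamma$ is then $p^\beta$ times a sum modulo $p^{\gamma-\beta}$ with non-degenerate phase. Your first-derivative argument bounds that sum by $\ll p^{(\gamma-\beta)/2}$, up to a factor $p^{1/2}$ if $\gamma-\beta$ is odd (harmless for fixed $p$). This gives $p^{(\gamma+\beta)/2}$.
\item If $\alpha<\beta$, the phase is $p^\alpha$ times a phase with unit derivative. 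The sum vanishes unless $\alpha\ge\gamma-1$, where the trivial bound suffices up to $p^{1/2}$.
\end{itemize}

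Equivalently, your ``balanced'' split $y=y_0+p^{\lceil(\gamma-\beta)/2\rceil}t$ does work. It works only because every Taylor coefficient of order $\ge2$ is divisible by $p^\beta$, and that is precisely what your sketch never establishes.
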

Substituting the above estimate into the rightmost curly bracket of \eqref{c-sum}, we obtain
\begin{align}\label{bb}
&\sum_{l_1,l_2\geq 1\atop l_1\neq l_2}V_1\big(\frac{l_1}{L}\big)
\overline{V_1\big(\frac{l_2}{L}\big)}\sum_{m\geq 1}
\mathrm{Kl}_3(al_1 m;q)\overline{\mathrm{Kl}_3(al_2 m;q)}
V_2\big(\frac{m}{M}\big)\nonumber\\\noalign{\vskip 2,8mm}
&=\frac{M}{q^{1/2}}\sum_{l_1,l_2\geq 1\atop l_1\neq l_2}V_1\big(\frac{l_1}{L}\big)
\overline{V_1\big(\frac{l_2}{L}\big)}\sum_{\tilde{m}\in \mathbb{Z}}
\mathcal{C}_{a}(\tilde{m},l_1,l_2;q)\widehat{V_2}\bigg(\frac{\tilde{m}}{q/M}\bigg)\nonumber\\\noalign{\vskip 2,8mm}
&\ll \frac{M}{q^{1/2}}\sum_{l_1,l_2\asymp L\atop l_1\neq l_2}\,
\sum_{\tilde{m}\ll q/M}
p^{(\min\{\gamma, \nu_p(l_1-l_2),\nu_p(\tilde{m})\})/2}\nonumber\\\noalign{\vskip 2,8mm}
&\ll \frac{M}{q^{1/2}}
\left(Lp^{\gamma/2}+\frac{L^2q}{M}\right)
\ll ML+L^2q^{1/2}.
\end{align}
Combining \eqref{l1=l2} and \eqref{bb},
we obtain 
\begin{align}\label{upbound3}
\mathcal{B}(L,M) \ll&\,\frac{X^{1+o(1)}M^{1/2}}{q^2}
\bigl(LM+L^2q^{1/2}\bigr)^{1/2}\nonumber\\
\ll& \,\frac{X^{1+o(1)}}q\bigl(\frac{1}{L}\frac{q^4}{X^2}
+L\frac{q^{3/2}}{X}\bigr)^{1/2}.
\end{align}
This bound \eqref{upbound3} is good as long as
$$q \leq L^{1/4}X^{1/2-\eta},$$ and $$q \leq X^{2/3-\eta}L^{-2/3}.$$

\subsection{Conclusion}

Let $L_0=X^{52/229}$ be the solution of the equation
$$L_0^{36/175}X^{82/175}=X^{2/3}L_0^{-2/3}=X^{1/2+7/458}.$$

We need to show that for any  small enough  $\eta>0$
and any prime power $q=p^\gamma$ satisfying
$$X^{1/2-\eta}\leq q\leq X^{1/2+7/458-\eta},$$ one has
\begin{equation*}
\begin{split}
\mathop{\sum_{n\geq 1}}_{n\equiv a\bmod q}
\lambda_{1\boxplus f}(n)&V\big(\frac{n}{X}\big)
-\frac{1}{\varphi(q)} \mathop{\sum_{n\geq 1}}_{(n,q)=1}
\lambda_{1\boxplus f}(n)V\big(\frac{n}{X}\big)\ll (X/q)^{1-\delta},
\end{split}
\end{equation*}
for some sufficiently small $\delta=\delta(\eta)>0$.

It is sufficient to show that this bound holds for any of the sums
\eqref{MNsums} for $L,M$ satisfying
$$1\leq LM\leq q^3/X.$$
\begin{itemize}
\item[-] If  $L\gg p^{-\kappa}q$, we use \eqref{MNbound2}.
\item[-] If  $L_0\ll L\ll p^{-\kappa}q$, we use \eqref{upbound2}.
\item[-] If  $q^4X^{-2+2\delta}\ll L\ll L_0$, we use \eqref{upbound3}.
\item[-] If $L\ll q^4X^{-2+2\delta}$, we use \eqref{upbound1}.
\end{itemize}

\bigskip

\noindent{\bf Acknowledgements.}
We gratefully acknowledge the many helpful suggestions 
of Professor Yongxiao Lin during the preparation of the paper.

\bibliographystyle{amsplain}

\end{document}